\def\be#1\ee{\begin{equation}#1\end{equation}}
\newtheorem{thm}{Theorem}
\newtheorem{lem}[thm]{Lemma}
\newtheorem{prop}[thm]{Proposition}
\newtheorem{rem}[thm]{Remark}
\DeclareMathOperator{\diag}{diag}
\DeclareMathOperator{\var}{var}
\DeclareMathOperator{\cov}{cov}
\def\P{{\mathbb{P}}}
\def\R{\mathbb{R}}
\def\E{\mathbb{E}\,}
\def\dd{\mbox{d}}
\newenvironment{proof}[1][] {\noindent {\bf Proof#1:} }{\hspace*{\fill}$\square$\medskip\par}
\def\DD{{\mathcal D}}
\newcommand{\eps}{\varepsilon}
\def\TT{{\mathcal{T}}}
\def\U{\mbox{{P}}}
\def\WW{{\mathcal{W}}}
\def\vecWW{\boldsymbol{\WW}}
\def\XX{{\mathcal{X}}}
\def\vecX{\boldsymbol{X}}
\def\vecXX{\boldsymbol{\XX}}
\def\ZZZ{{\mathfrak{Z}}}
\def\vecZ{\boldsymbol{Z}}
\def\vecpsi{\boldsymbol{\psi}}
\def\tvecX{\widetilde{\boldsymbol{X}}}
\def\deq{{~\stackrel{d}{=}~}}
\def\MA{\AA} 
\def\newY{{\mathfrak{Y}}}
\def\ZZZ{{\mathfrak{Z}}}
\def\vecZZZ{\boldsymbol{{\mathfrak{Z}}}}
\def\generalizedq{{g}}
\def\AA{\mathcal{A}}
\let\BFseries\bfseries\def\bfseries{\BFseries\mathversion{bold}} 
\def\tod{{~\stackrel{d}{\to}~}}     
\def\uu{{\mathfrak u}}
\def\br{{\mathfrak b}}
\def\hX{\widehat{X}}
\def\tX{\widetilde{X}}
\def\teps{\widetilde{\eps}}
\def\tsigma{\widetilde{\sigma}}
\def\hB{\widehat{B}}
\begin{document}

\title{Universal break law for chains of Brownian particles
\\ with nearest neighbour interaction}
\author{Frank Aurzada, Volker Betz, and Mikhail Lifshits}
\maketitle

\begin{abstract}
We investigate the behaviour of a finite chain of Brownian particles,
interacting through a pairwise potential $U$, with one end of the chain fixed and
the other end  pulled away, in the limit of slow pulling speed and small Brownian noise.
We study the instant when and the place where the chain ``breaks'', that is,
the distance between two neighbouring particles becomes larger than a certain threshold.

We assume $U$ to be attractive and strictly convex up to the break distance, and three times continuously differentiable.
We consider the regime, where both the pulling and the noise
significantly influence the distribution of the break time and break
position. It turns out that in this regime there is a universality
of both the break time distribution and the break position distribution,
in the sense that the limiting quantities do not depend on the details of $U$,
but only on its curvature at the break distance.
\end{abstract}

\noindent{\bf Keywords:}  Interacting Brownian particles; stochastic differential equations, rupture of a molecular chain.
\medskip

\noindent {\bf 2010 Mathematics Subject Classification:} 60K35,  secondary: 60G15, 60H10, 60J70.

\section{Introduction}
Interacting Brownian particles are a natural and popular model for physical systems such as
crystals, soft matter, or interacting colloidal particles: the interaction models the force between the particles, while the
noise models external influences on the system, such as the collision with much smaller
particles that are not explicitly modelled, or thermal fluctuations. Some of the
possible application scenarios are given e.g.\ in \cite{Sei12,SKF2016}.

If systems of interacting particles evolve in a smooth way (on large scales),
it is natural to investigate their macroscopic behaviour via hydrodynamic limits.
This is by no means an easy problem, but good progress has been made over the last decades.
For systems at equilibrium, important results include the seminal work \cite{Sp86} on Gaussian fluctuations around reversible equilibria,
and the recent significant progress \cite{DGP17} on convergence to the KPZ equation for weakly asymmetric one-dimensional
Ginzburg-Landau interface models. For the hydrodynamic limit of general non-equilibrium systems, the classical result \cite{Va90}
establishes the hydrodynamic limit for one-dimensional systems with repulsive interactions on the torus.
In the case of Ginzburg-Landau models, significant progress (in particular, extension to higher dimensions) was achieved in
\cite{FS97}.

A very different situation arises in cases where the system of interacting particles behaves in a way that is discontinuous
on the macroscopic scale, such as when a material breaks under strain. The most significant, but also the most difficult instance
of such a situation is the dynamics of the propagation of cracks through solids, which occurs on a different time scale than the standard macroscopic
dynamics. Due to this fact there are, to our knowledge, no mathematical tools for rigorously investigating the problem, and all the
activity is on numerical studies, see e.g.\ \cite{Ba17,PH19}. Even for the one-dimensional case, i.e.\ the rupture of a
molecular chain under strain, the majority of the activity is non-rigorous, such as \cite{FS1, FS2, lee, RBM19}.

Indeed, one of very few\footnote{There is another model represented in \cite{Muz}; see also \cite{MalMuz,Mal}.}
mathematically rigorous attempts on the problem of chain rupture that we are aware of
is that started in \cite{allman_betz_09}, and then extended in different ways in \cite{ABL, allman_betz_11}.
In the present paper, we significantly advance the understanding of that model; the most important new observation is that in the
most relevant parameter regime (the intermediate regime), for a chain of finite but arbitrary length,
the asymptotic distribution of both the break time and position are universal in the sense that they do not depend on most details of the intermolecular force.

Our model is mathematically
equivalent to the one-dimensional Ginz\-burg-Lan\-dau model with a time-dependent boundary condition.
To be precise, let $d\geq 2$ be an integer, and consider a chain of $d+1$ particles located
on the real line, interacting via a nearest neighbour\footnote{We always use the word `neighbour' to mean consecutive indices, not neighbours in space.
This is justified e.g.\ if we think of the chain as consisting of a string of molecules. On the other hand, in the
applications we have in mind particles swap places with negligible probability, in which case there is no difference between spatial
and index neighbours.}
force given by the derivative of a potential $U$.
The positions of the particles at time $t$ are denoted by
$\XX^0_t,\dots, \XX^d_t$. We assume that $\XX_0^i = i$, $\XX^0_t = 0$ for all $t$, and $\XX^d_t = \XX^d_0 + \eps t$ for $\eps>0$ and all $t$.
This means that the leftmost particle is fixed and the rightmost one is pulled with speed $\eps$ to the right.
Altogether, the model is thus described by the system of stochastic differential equations
\begin{equation} \label{eqn:original_U}
   \begin{cases}
     \XX_0^i = i & i=0,1,\ldots, d;\\
     \XX_t^0 = 0 & t\geq 0;\\
     \XX_t^d = d+\eps t & t\geq 0;\\
     \dd \XX_t^i = \left(U'(\XX^{i+1}_t-\XX^{i}_t) - U'(\XX^{i}_t- \XX^{i-1}_t)\right)\dd t
     +\sigma \dd B_t^i,& i=1,\ldots,d-1, t\geq 0,
   \end{cases}
\end{equation}
where $(B_t^i)_{t\geq 0}$ are independent Brownian motions, $i=1,\ldots,d-1$,
$\sigma\geq 0$, $\eps\geq 0$, and $U$ is a sufficiently regular function. We will often write
${\vecXX}_t:=(\XX^0_t,\dots,\XX^d_t)^\top$.

We will be interested the asymptotic behaviour of the model as $\eps$ and $\sigma$ vanish. 
Quantities of interest are the time and location (along the chain) of the chain rupture under the dynamics.
The physically most desirable choice for $U$
is a potential that is attractive at short distances but becomes flat at infinity. This then
leads to a motion where at first the chain becomes more and more elongated, until at some point a fluctuation makes one of the
gaps between two neighbouring particles so large that it energetically favourable for the chain to split into two disconnected
pieces. It is not hard to see that e.g.\ for potentials $U$ that are
strictly convex on an interval containing the starting distance of $1$ between two particles,
this critical gap size corresponds to the first inflection point of $U$ on $[1,\infty)$,
i.e.\ the smallest value $r$ for which $U''(r) = 0$.

The investigation of a break at an inflection point poses some difficulties. The reason is that in the
intermediate regime that we are interested in, the chain remains in a position where
all particles are very nearly evenly spaced right up to the time when it breaks.
In such a situation of almost equal distances $s$, a Taylor expansion of $U'$ around $s$ shows that the effective
force which prevents each particle from leaving the position in the middle of its neighbours is equal to $U''(s)$.
A first problem is that this vanishes when $s$ approaches the inflection point $r$, and higher order terms of the expansion
take over and have to be dealt with. A second problem is that when some fluctuation eventually causes the distance between
two particles to exceed the critical distance $r$, another fluctuation may well
bring them back closer together before the deterministic part of the dynamics has had enough time to pull the chain apart.
It is therefore not even completely clear what the correct definition for a break time should be.

In \cite{allman_betz_11}, the first problem above was solved for the case when $d=3$ (one free particle) and $U'$ is a
suitable third order polynomial. Then, the random time when the size of one of the gaps first reaches the
inflection point can be analysed using the asymptotics of Airy functions. No extension
of this result to longer chains or to the second problem mentioned above exists. Since the problem has some similarities with
a metastability situation, methods from \cite{BovH15} might work,
but the time-dependence, the fact that all saddles of the total potential energy are of the same height to
leading order, and the detailed nature of the relevant questions (see below) mean that at least they would need to be extended
in a non-trivial way.


In the present work, we proceed as in \cite{allman_betz_09} and avoid both problems discussed above.
We assume that there is a fixed distance $\br > 1$ such that the chain breaks whenever two
neighbouring particles are $\br$ or more apart from each other. Physically, this can be justified by a sudden failure of the
molecular bonds beyond a certain threshold. Mathematically, we introduce the stopping times
\begin{equation} \label{eqn:def_tau_i}
  \tau^i_{\XX,\br}=\tau^i_{\XX,\br}(\eps,\sigma) := \inf\{ t\geq 0 ~|~ \XX^i_t - \XX^{i-1}_t = \br \}
\end{equation}
for $i\in\{1,\ldots,d\}$, and
\begin{equation} \label{eqn:def_tau}
  \tau_{\XX,\br} =\tau_{\XX,\br}(\eps,\sigma):= \min_{1\le	q i \leq d} \tau^i_{\XX,\br}
    = \inf\{ t\geq 0 : \exists i\in\{1,\ldots,d\} ~:~  \XX^i_t - \XX^{i-1}_t = \br \},
\end{equation}
and investigate their distributions. Furthermore, we assume that $U$ is strictly convex and increasing up to $\br$, thus guaranteeing
that a particle configuration with equal distances between particles is a stable equilibrium of the no-noise dynamics.
From the geometric point of view based on the observation of the process
$({\vecXX}_t)$, the break time simply means the exit time
of ${\vecXX}$ from a certain deterministic polytope.
Accordingly, we call $\tau^i_{\XX,\br}$, $\tau_{\XX,\br}$, and other similar variables {\it exit times}.


The case $d=3$ of a single free particle with strictly convex $U$ was
treated in \cite{allman_betz_09}. There, methods from \cite{BeGe06}
together with some symmetry considerations (possible due to $d=3$)
yielded the following dichotomy: if $\sigma \ll \eps$ (fast pulling regime),
the chain breaks deterministically at the right link,
while for $\sigma \gg \eps$, it breaks at each link with equal probability.
The threshold between both regimes was identified only
up to a logarithmic factor, and a large deviation (slow pulling) regime
where $1/\sigma$ is exponentially large in $1/\eps$ was not covered.
These shortcomings were overcome in the recent work \cite{ABL} for the
case where $U$ is quadratic. In that work, a detailed analysis of the
resulting Gaussian processes exhibited three regimes: the already
mentioned fast pulling regime, where the limiting quantities are governed
by the pulling force only; a slow pulling / large deviation regime where
the rupture is caused essentially solely by the noise; and an interesting
intermediate regime where both the pulling and the noise determine the
limiting characteristics.
In all of these regimes, \cite{ABL} provides precise asymptotics of the
break times and locations for chains of arbitrary length.

In the present paper, we extend the analysis beyond the case of quadratic potentials for the regime of intermediate pulling.
This regime is the most interesting one, since the fast pulling case is anyway dominated by the deterministic dynamics, while the breakage in  the slow
pulling case relies on a large deviation event and results will therefore depend on the details of the potential. In contrast,
for the intermediate pulling regime, we observe a universal behaviour for the break time distribution, in the sense that it only
depends on the curvature $U''(\br)$ of $U$ at the breaking distance.

The reason for this universality is not hard to understand intuitively.
Since the chain (of initial length $d$) must break once its total length exceeds
$d \br$, there is the simple, but important bound
\begin{equation} \label{eqn:t_ast}
   \tau_{\XX,\br} \leq t_\ast = t_\ast(\eps,\br) := d (\br-1) / \varepsilon.
\end{equation}
For the intermediate pulling regime, the break actually occurs when the chain is quite close to the maximal allowed elongation
$d \br$,
and is therefore initiated by
rather small fluctuations around the stable equilibrium. Therefore, the quadratic approximation to the potential is a good one
for this situation, and we obtain both the universality and the actual result by comparison with the relevant Gaussian processes.
Our proof follows this intuition.

Let us end this introduction by briefly discussing possible future extensions of our results. First of all, the assumption of strict
convexity of $U$ up to the break location is not harmless; in \cite{allman_betz_11}, where the situation with a break at an inflection
point of $U$ is studied, it is shown there that the
scaling of $\sigma$ with $\eps$ for the threshold between intermediate and fast pulling regime is different from the situation where we assume strict convexity. An extension of the results of \cite{allman_betz_11}
to longer chains in the spirit of the present paper would be very interesting,
but would need to do without the theory of Ornstein-Uhlenbeck processes that we use crucially. Another route for improvement would be
to investigate a chain of a length $d(\eps)$ that increases as $\eps \to 0$. Here, the obstacle to overcome is that in
\cite{ABL}, the spectral gap of a certain discrete Laplacian plays a crucial role, which disappears as the chain gets infinitely long.

\section{Main result}

In this section we give our main result and an outline of its proof.
We study the system \eqref{eqn:original_U} with the condition that the chain breaks when the distance between
a pair of neighbouring particles reaches the value $\br > 1$. The random time $\tau_{\XX,\br}^i$ at which
distance $\br$ is reached by the $i$-th link is thus given by \eqref{eqn:def_tau_i},
and the break time $\tau_{\XX,\br}$ of the chain by \eqref{eqn:def_tau}. Recall also the deterministic upper
bound $t_\ast$ on the break time given in \eqref{eqn:t_ast}.
We will make the following assumption for the potential $U$: \\[2mm]
\noindent {\bf Assumption $\U$.} The function $U$ is three times continuously differentiable
and $U''$ is strictly positive on $[1,\br]$.
\\[2mm]
We will investigate the
intermediate pulling regime characterized by the conditions
\begin{equation} \label{eqn:stretch_vb}
    \sigma/\eps\to \infty \qquad \text{and} \qquad \sigma^2 |\ln \eps|^3 \to 0.
\end{equation}
on the scaling parameters $\sigma$ and $\eps$. For stating our result, we define the quantities
\begin{equation}\label{eqn:parameterv0}
\begin{array}{l}
    v^2:=\frac{d-1}{2d},\qquad \gamma:=\sqrt{2} d v = \sqrt{d(d-1)},
\\[1mm]
   A_1:=A_d:=\frac{d}{d-1},\qquad A_i:=\frac{2d}{d-1}, i\in\{2,\ldots,d-1\},
\\[1mm]
 a_i:=vd A_i  / \sqrt{2\pi} \text{ for }i\in\{1,\ldots,d\}, \\[1mm]
a_0:=\sum_{i=1}^d a_i = 2vd^2 /\sqrt{2\pi}, \qquad  b:=\sqrt{2}/(vd).
\end{array}
\end{equation}
Recall that a random variable $\chi$ is {\it double exponential} (or Gumbel) with parameters $a,b>0$, if
$$
   \P( \chi \leq r ) = \exp( -a \exp( - b r)), \qquad r\in \R.
$$

\begin{thm}  \label{thm:nonlinear}
Let $(\XX_t^i)_{i=0, \ldots, d}$ solve the system \eqref{eqn:original_U},
where the potential $U$ satisfies Assumption $\U$. Let $\br > 1$, set $\uu := U''(\br)$,
and define $\tau_{\XX,\br}^i$, $\tau_{\XX,\br}$ and $t^\ast(\eps)$ as
in \eqref{eqn:def_tau_i}, \eqref{eqn:def_tau}
and \eqref{eqn:t_ast}, respectively. Then in the parameter regime described
by \eqref{eqn:stretch_vb}, we have
the following weak limit theorems for the break times as $\eps, \sigma \to 0$:
\begin{eqnarray} \label{eqn:intermediateexittimescaling_XX}
      \frac{\sqrt{\uu}\, \eps}{\sigma}\, \sqrt{\ln(\sigma/\eps)}
        \left( t_\ast(\eps,\br) - \gamma \frac{\sigma}{\sqrt{\uu} \eps} \sqrt{\ln (\sigma/\eps)}
        -\tau^i_{\XX,\br}(\eps,\sigma) \right)
     \tod \chi_i(\uu),
\\ \nonumber
    \qquad i\in \{1,\ldots,d\},
\end{eqnarray}
and
\begin{equation} \label{eqn:intermediateexittimescaling-tau_XX}
      \frac{\sqrt{\uu}\, \eps}{\sigma}\, \sqrt{\ln(\sigma/\eps)}
        \left( t_\ast(\eps,\br) - \gamma \frac{\sigma}{\sqrt{\uu} \eps} \sqrt{\ln (\sigma/\eps)}
        -\tau_{\XX,\br}(\eps,\sigma) \right)
     \tod \chi_0(\uu),
\end{equation}
where for each $i$, $\chi_i(\uu)$ is a double exponential random variable with parameters
$\sqrt{\uu}\, a_i, b$, respectively. \\
Moreover, under the same assumptions, we have
\be \label{eqn:positions}
   \P(\tau_{\XX,\br}=\tau_{\XX,\br}^i) \to \begin{cases}
                           \frac{1}{d-1} & i\in\{2,\ldots,d-1\};\\
                           \frac{1}{2(d-1)} & i\in\{1,d\}
                       \end{cases}
\ee
as $\sigma, \eps \to 0$.
\end{thm}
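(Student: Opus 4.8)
The plan is to reduce the nonlinear system \eqref{eqn:original_U} in the regime \eqref{eqn:stretch_vb} to the quadratic (Gaussian) case, for which sharp asymptotics of break times and positions are available from \cite{ABL}, and then to read \eqref{eqn:positions} off the structure of the resulting limit. Let $\bar\vecXX$ be the solution of \eqref{eqn:original_U} with $\sigma=0$, let $Y^i_t:=\XX^i_t-\bar\XX^i_t$ ($i=1,\dots,d-1$, with $Y^0_t=Y^d_t:=0$) be the fluctuations of the free particles, and let $W^i_t:=Y^i_t-Y^{i-1}_t$ ($i=1,\dots,d$) be the induced gap fluctuations; note $\sum_{i=1}^d W^i_t=0$, and that $\tau^i_{\XX,\br}$ is the first time $W^i_t$ hits the moving barrier $\br-\bar Z^i_t$, where $\bar Z^i_t$ is the $i$-th gap of $\bar\vecXX$; this barrier decreases essentially linearly, $\br-\bar Z^i_t\approx\eps(t_\ast-t)/d$. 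A Taylor expansion of $U'$ turns the equations for $Y$, to leading order, into a linear (slowly time-dependent) system with drift matrix $-U''(\bar Z^\cdot_t)$ times the $(d-1)\times(d-1)$ Dirichlet discrete Laplacian $\Delta$ (which is strictly positive definite -- its spectral gap is the crucial structural input, used also in \cite{ABL}), plus a remainder controlled by $\|U'''\|_\infty$ on $[1,\br]$ (this is where Assumption $\U$ enters) times the square of the relevant fluctuations.

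First I would show that, under \eqref{eqn:stretch_vb}, the break happens with probability tending to $1$ in a window $t\in[t_\ast-C\tfrac{\sigma}{\eps}\sqrt{\ln(\sigma/\eps)},\,t_\ast]$, on which $\bar Z^i_t\to\br$ uniformly, hence $U''(\bar Z^i_t)\to\uu$, and on which the typical size of $W$ is of order $\sigma\sqrt{\ln(\sigma/\eps)}\to0$. This lets me couple $W$ with the stationary solution of the constant-coefficient system obtained by freezing $U''\equiv\uu$; the latter has Gaussian marginals with $\var(W^i_t)=\tfrac{\sigma^2}{\uu}\cdot\tfrac{d-1}{2d}=\tfrac{v^2\sigma^2}{\uu}$ for every $i$, and the boundary links $i\in\{1,d\}$ differ from the interior ones only through the short-time slope of their autocovariance (equivalently, through their up-excursion rate), which is halved at the boundary. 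The precise scaling in \eqref{eqn:stretch_vb}, in particular the power $|\ln\eps|^3$, is exactly what makes both the deterministic lag $\bar Z^i_t-(1+\eps t/d)=O(\eps)$ and the cumulative Taylor remainders negligible on the fine ($\chi$-)scale, i.e.\ on the time resolution $\sim\sigma/(\eps\sqrt{\ln(\sigma/\eps)})$ over the horizon $\sim1/\eps$. Carrying out this coupling uniformly on $[0,t_\ast]$ with quantitative control of all errors is the main obstacle; everything after it is, in effect, the extreme-value analysis of \cite{ABL}.

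Granting the coupling, the point processes of high up-excursions of $W^1,\dots,W^d$ against the moving barrier $\eps(t_\ast-t)/d$ converge jointly to independent Poisson processes -- joint, and with independent limits, because $\covcoef(W^i_t,W^j_t)$ is bounded away from $\pm1$, so different links decouple at the relevant levels -- exactly as in \cite{ABL}. This yields \eqref{eqn:intermediateexittimescaling_XX} with $\chi_i(\uu)$ double exponential of parameters $\sqrt{\uu}\,a_i,\,b$ (the factor $\sqrt{\uu}$ entering through the drift strength, the shift $\gamma$ and the rate $b$ being tuned precisely to this rescaling, and the ratio $a_i$ -- interior $=2\times$ boundary -- reflecting the up-excursion rates), and simultaneously the joint convergence of the rescaled vector $(\tau^1_{\XX,\br},\dots,\tau^d_{\XX,\br})$ to $(\chi_1(\uu),\dots,\chi_d(\uu))$ with independent coordinates; \eqref{eqn:intermediateexittimescaling-tau_XX} then follows since $\min_i\tau^i_{\XX,\br}$ corresponds to $\max_i\chi_i(\uu)$, whose law is double exponential with parameter $\sqrt{\uu}\,a_0$.

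Finally, \eqref{eqn:positions} is immediate from that joint convergence. Because of the sign in the rescaling, and since ties have vanishing probability in the limit (the $\chi_i(\uu)$ being continuous and a.s.\ distinct), $\{\tau_{\XX,\br}=\tau^i_{\XX,\br}\}$ is asymptotically equivalent to $\{\chi_i(\uu)=\max_{j}\chi_j(\uu)\}$, and for independent double exponentials with parameters $(\sqrt{\uu}\,a_j,b)$, conditioning on the value of $\chi_i(\uu)$,
\[
  \P\big(\chi_i(\uu)\ge\chi_j(\uu)\ \forall j\big)=\int_\R \sqrt{\uu}\,a_i\,b\,\ex^{-br}\exp\!\big(-\sqrt{\uu}\,a_0\,\ex^{-br}\big)\,\dd r=\frac{a_i}{a_0}.
\]
The factors $\sqrt{\uu}$ and $b$ cancel, so the limiting break-position law depends neither on the curvature $\uu$ nor on any further feature of $U$; inserting $a_i/a_0=\tfrac1{d-1}$ for $i\in\{2,\dots,d-1\}$ and $a_i/a_0=\tfrac1{2(d-1)}$ for $i\in\{1,d\}$ from \eqref{eqn:parameterv0} gives \eqref{eqn:positions}.
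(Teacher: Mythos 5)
Your overall strategy is the right one and matches the paper's in spirit (linearize around a slowly moving deterministic profile, freeze the stiffness at $\uu=U''(\br)$ near the break window, and reduce to the Gaussian results of \cite{ABL}), but as a proof it has a genuine gap: the step you yourself label ``the main obstacle'' --- the quantitative coupling of the nonlinear gaps with the frozen-coefficient Gaussian system, uniformly on the relevant time scales --- is precisely the content of the paper's proof, and it is not routine. Two difficulties are glossed over. First, the Taylor remainder in your linearization is quadratic in the fluctuations of the \emph{nonlinear} process itself, so a naive Gronwall bound is circular; the paper resolves this with the self-bounding inequality $S_t^\ast \leq c_r\bigl((S_t^\ast)^2+\tfrac12 (M_t^\ast)^2\bigr)$ up to a stopping time $t_r$ (Proposition~\ref{prop:comparison}), a root-separation/continuity argument, and the resulting bound of Proposition~\ref{prop:vbgeneral}, which controls the nonlinear--Gaussian distance purely in terms of the Gaussian quantity $M_t^\ast$; Proposition~\ref{prop:vbspecial} and the scalar comparison Lemma~\ref{lem:winternoteslemma} (variance/covariance expansions with errors $O(\eps|\ln\eps|)+O(\sigma^2\ln(\sigma/\eps))$, Gaussian concentration, Dudley's bound) then show these errors are below the $\chi$-scale exactly under \eqref{eqn:stretch_vb}; none of this is supplied or replaced by an alternative estimate in your sketch. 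Second, even granting sup-norm closeness $\delta$, you cannot simply ``apply the extreme-value analysis of \cite{ABL}'' to the coupled process: path closeness must be converted into closeness of hitting-time laws. The paper does this by sandwiching $\tau_{\XX,\br}$ between break times of the comparison process at perturbed thresholds $\br\pm 2\delta$, $\br\pm4\delta$, which requires a version of the Gaussian theorem that is uniform in the threshold (Remark~\ref{rem:uniform}) together with no-early-break statements (Lemma~\ref{lem:noearlyZ} and its analogue for $\vecX_\uu$); your proposal asserts the localization of the break time near $t_\ast$ for the nonlinear process but gives no argument, and omits the threshold-perturbation mechanism entirely.

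Two smaller points. Your linearization is around the noiseless nonlinear solution $\bar{\vecXX}$ rather than the paper's exact evenly spaced profile $q_t$; this is a legitimate variant but adds the (unproven) claim that $\bar{\vecXX}$ tracks the minimizer to order $\eps$ uniformly. And your derivation of \eqref{eqn:positions} presupposes joint convergence of the rescaled vector $(\tau^1_{\XX,\br},\dots,\tau^d_{\XX,\br})$ to \emph{independent} Gumbel coordinates, which is an extra input beyond the marginal limits you quote; the computation $\P(\chi_i\geq \chi_j\ \forall j)=a_i/a_0$ is correct given that input, but the paper instead transfers the position law directly through the same sandwich argument from the Gaussian case, where it is already established in \cite{ABL}. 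In short: correct blueprint, correct constants and heuristics, but the analytic core (Propositions~\ref{prop:vbgeneral}--\ref{prop:vbspecial}, Lemmas~\ref{lem:noearlyZ}--\ref{lem:ZXclose} and \ref{lem:winternoteslemma}, and the uniform-in-$\br$ sandwich) is missing rather than replaced.
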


{\bf Remark.}  The above result is contained in \cite{ABL} for the special case $U(x)=x^2/2$; in that case, the processes
$\XX_t^i$ are Gaussian and can be analysed in great detail. Note that the break behaviour in the present result is indeed universal as it only depends
on the potential via $\uu$. From the relevant formulae it is also apparent that $\uu > 0$ is important for them to make sense;
this is another indication that in the case of break at an inflection point, as discussed in the introduction, a different
asymptotic behaviour should be expected.\\[2mm]

%

The first step in our proof consists in a slight generalization of the relevant result in \cite{ABL}. Put very succinctly,
it states that the statement of Theorem~\ref{thm:nonlinear} holds in the special case $U(x) = \uu x^2/2$ with $\uu > 0$.
Since we will need the notation in the proof later on anyway, we spell out the statement here.

Consider the linear system
\begin{equation} \label{eqn:original}
   \begin{cases}
     X_0^i = i & i=0,1,\ldots, d;\\
     X_t^0 = 0 & t\geq 0;\\
     X_t^d = d+\eps t & t\geq 0;\\
     \dd X_t^i = \uu(X^{i+1}_t+X^{i-1}_t -2 X^i_t)\dd t+\sigma \dd B_t^i,& i=1,\ldots,d-1, t\geq 0,
   \end{cases}
\end{equation}
where $\uu>0$ is a constant and we will abbreviate $\vecX_t:=(X^0_t,\ldots,X^d_t)^\top$.
As before, define the {\it break times} as
\[
  \tau^i=\tau^i(\eps,\sigma) := \inf\{ t\geq 0 ~|~ X^i_t - X^{i-1}_t = \br \}
\]
and
\[
  \tau =\tau(\eps,\sigma):= \min_{1\leq i \leq d} \tau^i
    = \inf\{ t\geq 0 : \exists i\in\{1,\ldots,d\} ~:~  X^i_t - X^{i-1}_t = \br \}.
\]
When we want to stress the dependence on the parameters $\uu$ and $\br$, we will write $\vecX_{\uu,t}$, $X^i_{\uu,t}$,
$\tau^i_{\uu,\br}(\eps,\sigma)$ and $\tau_{\uu,\br}(\eps,\sigma)$.

We consider scaling regime where
\begin{equation} \label{eqn:stretch}
    \sigma/\eps\to \infty \qquad \text{and} \qquad \sigma^2 |\ln \eps| \to 0.
\end{equation}
Note that this regime is slightly wider than the one given in \eqref{eqn:stretch_vb}; the small difference
is due to the fact that close to the large deviation regime, the Gaussian and the non-Gaussian processes start to look differently.
We  have

\begin{thm} \label{thm:intermediateslowpulling_ub}
Assume that $\eqref{eqn:stretch}$ holds. Then, as $\eps,\sigma\to 0$,
the analogue of \eqref{eqn:positions} holds for $\tau_{\uu,\br}(\eps,\sigma)$ and $\tau^i_{\uu,\br}(\eps,\sigma)$
and we have the following weak limit theorems for the break times:
\begin{eqnarray} \label{eqn:intermediateexittimescaling_ub}
     \frac{\sqrt{\uu}\, \eps}{\sigma}\, \sqrt{\ln(\sigma/\eps)}
        \left( t_\ast(\eps,\br) - \gamma \frac{\sigma}{\sqrt{\uu} \eps} \sqrt{\ln (\sigma/\eps)}
        -\tau^i_{\uu,\br}(\eps,\sigma) \right)
     \tod \chi_i(\uu),
\\ \nonumber
     \qquad i\in \{1,\ldots,d\},
\end{eqnarray}
and
\begin{equation} \label{eqn:intermediateexittimescaling-tau_ub}
      \frac{\sqrt{\uu}\, \eps}{\sigma}\, \sqrt{\ln(\sigma/\eps)}
        \left( t_\ast(\eps,\br) - \gamma \frac{\sigma}{\sqrt{\uu} \eps} \sqrt{\ln (\sigma/\eps)}
        -\tau_{\uu,\br}(\eps,\sigma) \right)
     \tod \chi_0(\uu),
\end{equation}
where $\chi_i(\uu)$ is a double exponential random variable with parameters
$\sqrt{\uu} a_i, b$, and the values $a_i, b$ are defined in \eqref{eqn:parameterv0}.
\end{thm}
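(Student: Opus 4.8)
The plan is to reduce Theorem~\ref{thm:intermediateslowpulling_ub} to the quadratic case already established in \cite{ABL} by a deterministic time change. If $(X^i_{\uu,t})$ solves the linear system \eqref{eqn:original} with curvature $\uu$, pulling speed $\eps$ and noise $\sigma$, then the time-rescaled process $Y^i_t := X^i_{\uu,t/\uu}$ solves the same system with curvature $1$, pulling speed $\eps':=\eps/\uu$ and noise $\sigma':=\sigma/\sqrt{\uu}$; this is immediate from the substitution $s=t/\uu$ in the drift integral together with the Brownian scaling $B_{s/\uu}\deq\uu^{-1/2}\widetilde B_s$. Since $t\mapsto t/\uu$ is an increasing bijection it matches the break events of the two systems: jointly in $i$ one has $\tau^i_{1,\br}(\eps',\sigma')\deq\uu\,\tau^i_{\uu,\br}(\eps,\sigma)$, and the event $\{\tau=\tau^i\}$ is preserved, so the analogue of \eqref{eqn:positions} for curvature $\uu$ is literally identical to the one for curvature $1$. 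One also checks that $(\eps',\sigma')$ still obeys \eqref{eqn:stretch}, since $\sigma'/\eps'=\sqrt{\uu}\,\sigma/\eps\to\infty$ and $(\sigma')^2|\ln\eps'|=\uu^{-1}\sigma^2|\ln(\eps/\uu)|\sim\uu^{-1}\sigma^2|\ln\eps|\to0$.

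Applying \cite{ABL} (the case $\uu=1$) to the pair $(\eps',\sigma')$ then yields $\frac{\eps'}{\sigma'}\sqrt{\ln(\sigma'/\eps')}\bigl(t_\ast(\eps',\br)-\gamma\frac{\sigma'}{\eps'}\sqrt{\ln(\sigma'/\eps')}-\tau^i_{1,\br}(\eps',\sigma')\bigr)\tod\chi_i(1)$, a double exponential with parameters $a_i,b$, and likewise for $\tau_{1,\br}$ with limit $\chi_0(1)$. It remains to transport this back under $\eps'=\eps/\uu$, $\sigma'=\sigma/\sqrt{\uu}$, $t_\ast(\eps',\br)=\uu\,t_\ast(\eps,\br)$ and $\tau^i_{1,\br}(\eps',\sigma')\deq\uu\,\tau^i_{\uu,\br}(\eps,\sigma)$. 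Writing $L:=\ln(\sigma/\eps)$ and noting $\ln(\sigma'/\eps')=L+\tfrac12\ln\uu$, the left-hand side above becomes $\sqrt{\uu}\,\tfrac{\eps}{\sigma}\sqrt{L+\tfrac12\ln\uu}\,(t_\ast(\eps,\br)-\tau^i_{\uu,\br}(\eps,\sigma))-\gamma(L+\tfrac12\ln\uu)$, whereas the quantity in \eqref{eqn:intermediateexittimescaling_ub}, expanded the same way, equals $\sqrt{\uu}\,\tfrac{\eps}{\sigma}\sqrt{L}\,(t_\ast(\eps,\br)-\tau^i_{\uu,\br}(\eps,\sigma))-\gamma L$.

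The crux is then to show that the difference of these two expressions tends in probability to a deterministic constant equal to exactly the shift needed to turn $\chi_i(1)$ into $\chi_i(\uu)$. Using the elementary identity $\sqrt{L}-\sqrt{L+c}=-c/(\sqrt{L}+\sqrt{L+c})$ and the first-order asymptotics $t_\ast(\eps,\br)-\tau^i_{\uu,\br}(\eps,\sigma)=\tfrac{\gamma\sigma}{\sqrt{\uu}\,\eps}\sqrt{L}\,(1+\rho_\eps)$ with $\rho_\eps\to0$ in probability (read off from the $\uu=1$ limit law), the cross term $\sqrt{\uu}\,\tfrac{\eps}{\sigma}(\sqrt{L}-\sqrt{L+\tfrac12\ln\uu})(t_\ast(\eps,\br)-\tau^i_{\uu,\br}(\eps,\sigma))$ tends to $-\tfrac{\gamma}{4}\ln\uu$, while the remaining part $-\gamma(L-(L+\tfrac12\ln\uu))$ equals $\tfrac{\gamma}{2}\ln\uu$, so the difference converges to $\tfrac{\gamma}{4}\ln\uu$. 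By Slutsky's lemma the expression in \eqref{eqn:intermediateexittimescaling_ub} therefore converges to $\chi_i(1)+\tfrac{\gamma}{4}\ln\uu$; and since $\tfrac{\gamma}{4}=\tfrac{1}{2b}$ by \eqref{eqn:parameterv0}, the Gumbel distribution function gives $\P(\chi_i(1)+\tfrac{1}{2b}\ln\uu\le r)=\exp(-a_i e^{b\cdot\frac{1}{2b}\ln\uu}e^{-br})=\exp(-\sqrt{\uu}\,a_i e^{-br})$, i.e.\ the limit has precisely the law $\chi_i(\uu)$. The same computation with $a_0$ in place of $a_i$ gives \eqref{eqn:intermediateexittimescaling-tau_ub}, and the positions statement was already obtained in the first step.

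I expect the one genuinely delicate point to be this last bookkeeping: the discrepancy between $\sqrt{\ln(\sigma'/\eps')}$ and $\sqrt{\ln(\sigma/\eps)}$ is negligible at the level of the leading term $t_\ast$ but \emph{not} at the level of the $O(1)$ fluctuations, so it cannot simply be discarded; one must verify that it contributes exactly the additive constant $\tfrac{1}{2b}\ln\uu$, which forces one to feed the first-order asymptotics of $t_\ast-\tau^i_{\uu,\br}$ (themselves a consequence of the cited theorem) into the cross term. Everything else — the time change, the verification of the scaling regime, and the arithmetic matching $\tfrac{\gamma}{4}$ with $\tfrac{1}{2b}$ — is routine.
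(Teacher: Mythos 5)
Your reduction in $\uu$ (the time change $t\mapsto t/\uu$ together with Brownian scaling, giving $\tau^i_{1,\br}(\eps/\uu,\sigma/\sqrt{\uu})\deq\uu\,\tau^i_{\uu,\br}(\eps,\sigma)$ jointly in $i$) and your identification of the Gumbel shift are exactly the paper's Steps 1, 3 and 4: the paper likewise expands $\sqrt{\ln(\sigma\sqrt{\uu}/\eps)}=\sqrt{L}+\frac{\ln\uu}{4\sqrt L}(1+o(1))$, obtains the limit $\chi_i+\frac{\gamma\ln\uu}{4}$, and uses $b\gamma=2$ to absorb the shift into the parameter $\sqrt{\uu}\,a_i$. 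Your version of this bookkeeping, which feeds the first-order asymptotics $t_\ast-\tau^i_{\uu,\br}\sim\frac{\gamma\sigma}{\sqrt{\uu}\,\eps}\sqrt L$ into the cross term, is correct and in fact spells out a point the paper leaves implicit; your verification that $(\eps',\sigma')$ stays in the regime \eqref{eqn:stretch} is also fine.

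The one genuine gap is the input you cite. You invoke the result of \cite{ABL} for $\uu=1$ and \emph{arbitrary} $\br>1$, but \cite{ABL} only treats the standard case $\br=2$ (with $\uu=1$); the theorem you are proving is precisely the extension to general $(\uu,\br)$. So your argument is missing the reduction in $\br$, which is the paper's Step 2. Note that this reduction is not a naive rescaling of the SDE system, because the initial spacing is $1$ while the break distance is $\br$: the paper instead uses the representation from \cite{ABL}, $X^i_t-X^{i-1}_t=1+\sigma V^i_t+\eps\frac{t}{d}+\eps R^i_t$, and divides the break condition $\sigma V^i_t+\eps\frac{t}{d}+\eps R^i_t=\br-1$ by $\br-1$ to match the standard condition with parameters $\bigl(\tfrac{\eps}{\br-1},\tfrac{\sigma}{\br-1}\bigr)$, yielding $\tau^i_{1,\br}(\eps,\sigma)\deq\tau^i_{1,2}\bigl(\tfrac{\eps}{\br-1},\tfrac{\sigma}{\br-1}\bigr)$ jointly in $i$. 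The same remark applies to your positions claim: the analogue of \eqref{eqn:positions} for $\uu=1$ and general $\br$ is not available from \cite{ABL} directly, but follows as a by-product of this second scaling. Once this step is inserted (and your shift computation is then run with the combined parameters $\bigl(\tfrac{\eps}{\uu(\br-1)},\tfrac{\sigma}{\sqrt{\uu}(\br-1)}\bigr)$, where the factor $\br-1$ cancels inside the logarithm to leading order exactly as $\uu$ does), your proof coincides with the paper's.
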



\cite{ABL} contains Theorem~\ref{thm:intermediateslowpulling_ub} for the special case $\br = 2$, $\uu = 1$.
The passage to general $\br$ and $\uu$ is made by standard scaling arguments, which we spell out in Section
\ref{sec:scaling} for the convenience of the reader.

The second step is to replace $\uu$ in the system \eqref{eqn:original} by a time-dependent quantity $\phi(t)$.
Again, the short version of the result is that the statement of Theorem~\ref{thm:intermediateslowpulling_ub} remains true
with $\uu$ replaced by $\phi(t)$, but
we spell out the result as we will need the notation later in the proofs anyway.

Let $\vecZ_t=(Z_t^0,\ldots,Z^{d}_t)^\top$
solve the linear system with time-dependent coefficient
\begin{equation} \label{eqn:original_phi}
   \begin{cases}
     Z_0^i = i & i=0,1,\ldots, d;\\
     Z_t^0 = 0 & t\geq 0;\\
     Z_t^d = d+\eps t & t\geq 0;\\
     \dd Z_t^i = \phi(t)(Z^{i+1}_t+Z^{i-1}_t -2 Z^i_t)\dd t+\sigma \dd B_t^i,& i=1,\ldots,d-1, t\geq 0,
   \end{cases}
\end{equation}

Here we immediately take $\phi(t):=U''(q_t)$ with
$$
   q_t := 1 + \eps \frac{t}{d},
$$
where $U$ fulfills Assumption $\U$. This is the correct
linearization of \eqref{eqn:original_U} in the following sense: by the assumptions on $U$, the potential energy
$\boldsymbol U(x) = \sum_{i=1}^d U(x_{i} - x_{i-1})$ is minimized by the vector $(i (1+ \eps \frac{t}{d}))_{0 \leq i \leq d}$
at time $t$. Since the pulling is slow and the noise is small, the system will be close to that energy minimum at all times,
and as terms of order zero and one cancel when we Taylor expand each $U(x_i - x_{i-1})$ around $1+ \eps \frac{t}{d}$, so that $\phi(t)=U''(1+\eps \frac{t}{d})$
is the dominant term.
Notice in particular that for quadratic potentials $U(x)=\uu\, x^2/2$, we have $\phi(\cdot)\equiv\uu$ and
$(\vecZ_t)$ coincides with $(\vecX_{\uu,t})$.

Since $\vecZ = (Z^0, \ldots, Z^d)^\top$ is still a Gaussian process, we will be able to analyse it in great detail.
We work in the scaling regime
\begin{equation} \label{eqn:stretch_Z}
    \sigma/\eps\to \infty \qquad \text{and} \qquad \sigma^2 |\ln \eps|^{3/2} \to 0,
\end{equation}
which is larger than the one for Theorem~\ref{thm:nonlinear} but smaller than the one for
Theorem~\ref{thm:intermediateslowpulling_ub} due to our need to accommodate the fact that the coefficient of the linear
force now depends on time. As above, we introduce the break times
\[
  \tau^i_{Z,\br}=\tau^i_{Z,\br}(\eps,\sigma) := \inf\{ t\geq 0 ~|~ Z^i_t - Z^{i-1}_t = \br \}
\]
and
\[
  \tau_{Z,\br} =\tau_{Z,\br}(\eps,\sigma):= \min_{1\leq i \leq d} \tau^i_{Z,\br}
    = \inf\{ t\geq 0 : \exists i\in\{1,\ldots,d\} ~:~  Z^i_t - Z^{i-1}_t = \br \}.
\]

We have

\begin{thm} \label{thm:intermediateslowpulling_Z}
Assume that $\eqref{eqn:stretch_Z}$ holds. Fix $\br>1$ and set $\uu:=U''(\br)$. Then, as $\eps,\sigma\to 0$,
the analogue of \eqref{eqn:positions} holds for $\tau_{Z,\br}(\eps,\sigma)$ and $\tau^i_{Z,\br}(\eps,\sigma)$
and we have the following weak limit theorems for the break times:
\begin{eqnarray} \label{eqn:intermediateexittimescaling_Z}
      \frac{\sqrt{\uu}\, \eps}{\sigma}\, \sqrt{\ln(\sigma/\eps)}
        \left( t_\ast(\eps,\br) - \gamma \frac{\sigma}{\sqrt{\uu} \eps} \sqrt{\ln (\sigma/\eps)}
        -\tau^i_{Z,\br}(\eps,\sigma) \right)
     \tod \chi_i(\uu),
\\ \nonumber
    \qquad i\in \{1,\ldots,d\},
\end{eqnarray}
and
\begin{equation} \label{eqn:intermediateexittimescaling-tau_Z}
      \frac{\sqrt{\uu}\, \eps}{\sigma}\, \sqrt{\ln(\sigma/\eps)}
        \left( t_\ast(\eps,\br) - \gamma \frac{\sigma}{\sqrt{\uu} \eps} \sqrt{\ln (\sigma/\eps)}
        -\tau_{Z,\br}(\eps,\sigma) \right)
     \tod \chi_0(\uu),
\end{equation}
where $\chi_i(\uu)$ is a double exponential random variable with parameters
$\sqrt{\uu}\, a_i, b$, and $a_i,b$ are defined in \eqref{eqn:parameterv0}.
\end{thm}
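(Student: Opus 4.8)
To prove Theorem~\ref{thm:intermediateslowpulling_Z} we exploit that $\vecZ$ in \eqref{eqn:original_phi} is Gaussian and differs from the time-homogeneous system \eqref{eqn:original} with constant coefficient $\uu=U''(\br)$ only in that the drift coefficient is the slowly varying $\phi(t)=U''(q_t)$, $q_t=1+\eps t/d$. The plan is to compare $\vecZ$ pathwise with $\vecX=\vecX_{\uu}$ of \eqref{eqn:original} driven by the same Brownian motions, and then transfer Theorem~\ref{thm:intermediateslowpulling_ub}. The first step is a ``no early break'' bound: there is a sequence $T_\eps$ of the order $\tfrac{\sigma}{\eps}\sqrt{\ln(\sigma/\eps)}$ (the scale of the centring in \eqref{eqn:intermediateexittimescaling_Z}), with $\eps T_\eps\to0$, and $\P(\tau_{Z,\br}\ge t_\ast(\eps,\br)-T_\eps)\to1$. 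This is a routine Gaussian estimate: splitting $\vecZ_t=\overline{\vecZ}_t+\sigma V_t$ into its deterministic trajectory and a centred Ornstein--Uhlenbeck-type fluctuation of bounded variance, the gaps of $\overline{\vecZ}_t$ stay below $\br$ by an amount of order $\eps(t_\ast-t)$ while $\sup_{t\le t_\ast}\|V_t\|=O(\sqrt{|\ln\eps|})$ with high probability by Borell--TIS; balancing the two terms gives the claim, and of course $\tau_{Z,\br}\le t_\ast(\eps,\br)$ always. We may thus restrict to the window $I_\eps:=[t_\ast(\eps,\br)-T_\eps,\ t_\ast(\eps,\br)]$, on which $q_t\in[\br-\eps T_\eps/d,\br]$, hence $|\phi(t)-\uu|\le (\sup_{[1,\br]}|U'''|)\,\eps T_\eps/d=:\eta_\eps\to0$ --- this is where the upper bound on $\sigma$ in \eqref{eqn:stretch_Z} is used --- and $|\phi'|=O(\eps)$.

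For the pathwise comparison, write also $\vecX_t=\overline{\vecX}_t+\sigma V^{(\uu)}_t$, where $\overline{\vecX}$ and $V^{(\uu)}$ are the deterministic trajectory and the centred fluctuation of the constant-$\uu$ system, $V^{(\uu)}$ driven by the same Brownian motions as $V$. Then the noise cancels in $\vecZ_t-\vecX_t=(\overline{\vecZ}_t-\overline{\vecX}_t)+\sigma(V_t-V^{(\uu)}_t)$: both $\overline{\vecZ}-\overline{\vecX}$ and $V-V^{(\uu)}$ solve \emph{linear} systems with drift $\uu\Delta$ ($\Delta$ the interior discrete Laplacian, which has the same positive spectral gap $c_d=4\sin^2\!\big(\tfrac{\pi}{2d}\big)$ that plays a central role in \cite{ABL}) forced, respectively, by $(\phi(\cdot)-\uu)\Delta\overline{\vecZ}$ and $(\phi(\cdot)-\uu)\Delta V$, and with zero initial data. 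Since $\|\Delta\overline{\vecZ}_t\|=O(\eps)$ uniformly and $|\phi-\uu|\le\eta_\eps$ on $I_\eps$ (contributions of the distant past being crushed by the spectral gap), Duhamel's formula gives $\|\overline{\vecZ}_t-\overline{\vecX}_t\|=O(\eps)$ on $I_\eps$; likewise, the dissipative bound $\|V_t-V^{(\uu)}_t\|\lesssim\eta_\eps\int_0^t e^{-c_d\uu(t-s)}\|\Delta V_s\|\d s$ together with a Borell--TIS estimate on this Gaussian convolution controls $\sup_{t\in I_\eps}\|V_t-V^{(\uu)}_t\|$. Altogether there is a deterministic $\delta_\eps\to0$ with $\P(\sup_{t\in I_\eps}\|\vecZ_t-\vecX_t\|_\infty\le\delta_\eps)\to1$, and the point of the scaling regime \eqref{eqn:stretch_Z} --- strictly smaller than the regime \eqref{eqn:stretch} used for the purely linear Theorem~\ref{thm:intermediateslowpulling_ub}, precisely so as to leave room for this comparison error --- is that $\delta_\eps$ can be taken so small that it becomes negligible on the scale $\sigma/\sqrt{\ln(\sigma/\eps)}$ at which the break time is read off.

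On the event of the previous step, $|(Z^i_t-Z^{i-1}_t)-(X^i_t-X^{i-1}_t)|\le2\delta_\eps$ for all $t\in I_\eps$, so whenever a gap of $\vecZ$ hits $\br$ the corresponding gap of $\vecX$ is in $[\br-2\delta_\eps,\br]$, and conversely. Combining this with the localisation of $\tau_{Z,\br}$ and of $\tau_{\uu,\br\pm2\delta_\eps}$ (the latter from Theorem~\ref{thm:intermediateslowpulling_ub} applied at breaking distances $\br\pm2\delta_\eps$, whose break times also lie in $I_\eps$ since $\delta_\eps/\eps\ll T_\eps$) yields, with probability tending to one, $\tau_{\uu,\br-2\delta_\eps}(\eps,\sigma)\le\tau_{Z,\br}(\eps,\sigma)\le\tau_{\uu,\br+2\delta_\eps}(\eps,\sigma)$ and the analogous componentwise inequalities for $\tau^i$. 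Now apply Theorem~\ref{thm:intermediateslowpulling_ub} to the constant-$\uu$ system at breaking distances $\br\pm2\delta_\eps$ --- legitimate since there $\uu$ and the breaking distance are independent parameters, and the convergence, resting on the explicit Gaussian asymptotics of \cite{ABL}, is stable under the breaking distance varying by $o(1)$: as $t_\ast(\eps,\br\pm2\delta_\eps)=t_\ast(\eps,\br)\pm2d\delta_\eps/\eps$ while the normalising factor $\tfrac{\sqrt{\uu}\,\eps}{\sigma}\sqrt{\ln(\sigma/\eps)}$ and the subtracted $\gamma\tfrac{\sigma}{\sqrt{\uu}\,\eps}\sqrt{\ln(\sigma/\eps)}$ do not depend on $\br$, normalising the sandwich and subtracting the centring shifts the two bounds by $\pm\tfrac{2d\sqrt{\uu}\,\delta_\eps}{\sigma}\sqrt{\ln(\sigma/\eps)}=o(1)$. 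Both bounds therefore converge in law to $\chi_0(\uu)$, and the squeeze theorem gives \eqref{eqn:intermediateexittimescaling-tau_Z}; the same with $\tau$ replaced by $\tau^i$ gives \eqref{eqn:intermediateexittimescaling_Z}. For the break positions, on the same event the index attaining the minimum in the definition of $\tau_{Z,\br}$ agrees with the corresponding index for $\vecX$ unless two links of $\vecX$ are in competition within $2\delta_\eps$; as the joint limit theorem of \cite{ABL} for $(\tau^i_{\uu,\br})_i$ shows that ties occur with vanishing probability (the links separate at scale $\tfrac{\sigma}{\eps}\sqrt{\ln(\sigma/\eps)}\gg\delta_\eps$), such a competition has probability $o(1)$, so $\P(\tau_{Z,\br}=\tau^i_{Z,\br})=\P(\tau_{\uu,\br}=\tau^i_{\uu,\br})+o(1)$, which tends to the value in \eqref{eqn:positions} by Theorem~\ref{thm:intermediateslowpulling_ub}.

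The main obstacle is the pathwise comparison of the second step: one must push a Duhamel/Gronwall estimate through the dissipative semigroup of the discrete Laplacian against a Gaussian forcing and verify that the resulting deviation $\delta_\eps$ is genuinely smaller than the already small window $\sigma/\sqrt{\ln(\sigma/\eps)}$ on which the break time is resolved. Making the powers of $|\ln\eps|$ (and of $\ln(\sigma/\eps)$) close in this estimate is exactly what dictates the scaling restriction \eqref{eqn:stretch_Z}, and is the reason it is more stringent than the restriction \eqref{eqn:stretch} that suffices for the purely linear theorem; by contrast, the no-early-break bound, the use of the spectral gap, and the transfer of Theorem~\ref{thm:intermediateslowpulling_ub} under a vanishing change of the breaking distance are comparatively soft.
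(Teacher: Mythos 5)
Your proposal follows the same overall reduction as the paper: (i) a no--early--break estimate for $\vecZ$ (the paper's Lemma~\ref{lem:noearlyZ}); (ii) a pathwise closeness estimate between $\vecZ$ and the constant--coefficient process $\vecX_{\uu}$ driven by the same Brownian motions on the window of width of order $\frac{\sigma}{\eps}\sqrt{\ln(\sigma/\eps)}$ before $t_\ast$ (Lemma~\ref{lem:ZXclose}); and (iii) the sandwich $\tau_{\uu,\br-2\delta}\le\tau_{Z,\br}\le\tau_{\uu,\br+2\delta}$ followed by an application of Theorem~\ref{thm:intermediateslowpulling_ub} at the perturbed break distances $\br\pm2\delta$, using exactly the uniformity in $\br$ recorded in Remark~\ref{rem:uniform} and the observation that the induced shift of $t_\ast$ is negligible (the paper keeps $\delta=\theta\sigma/\sqrt{\ln(\sigma/\eps)}$ with $\theta$ fixed and lets $\theta\to0$ at the end, which is equivalent to your ``$\delta_\eps=o(\sigma/\sqrt{\ln(\sigma/\eps)})$'' formulation). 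Where you genuinely differ is in the proof of step (ii): you propose a Duhamel/Gronwall estimate through the dissipative semigroup of the discrete Laplacian, with forcing $(\phi-\uu)\MA\,\cdot$, controlled by the spectral gap and a Borell--TIS bound on the forced Gaussian convolution; the paper instead diagonalizes the system, reduces to scalar Ornstein--Uhlenbeck--type equations with coefficients $\uu$ and $\phi(t)$, and computes variances and covariances explicitly (Lemma~\ref{lem:winternoteslemma}), obtaining the cancellation $\E[(\newY_t-\ZZZ_t)^2]=\sigma^2\big(O(\sigma^2\ln(\sigma/\eps))+O(\eps|\ln\eps|)\big)$ before applying concentration and Dudley's entropy bound. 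Your route is softer and avoids the covariance calculus; quantitatively it lands in essentially the same place (both arguments ultimately need $\sigma^2\big(\ln(\sigma/\eps)\big)^3$ small, which is the role of the restricted regime and is comfortably available under \eqref{eqn:stretch_vb} for the application to Theorem~\ref{thm:nonlinear}), so nothing essential is lost, though the bookkeeping of log powers that you defer is precisely the technical content of the paper's Lemma~\ref{lem:winternoteslemma}.

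Two minor caveats. First, your no--early--break argument as sketched (crude Borell--TIS bound $\sup_{t\le t_\ast}\|V_t\|=O(\sqrt{|\ln\eps|})$) only localises $\tau_{Z,\br}$ in a window of width of order $\frac{\sigma}{\eps}\sqrt{|\ln\eps|}$, not $\frac{\sigma}{\eps}\sqrt{\ln(\sigma/\eps)}$; the paper's proof of Lemma~\ref{lem:noearlyZ} gets the sharper scale by summing over unit time blocks with a linearly growing margin. This is harmless for your scheme (a larger window still has $\eps T_\eps\to0$ and only slightly inflates $\eta_\eps$), but it should be done as in the paper if one insists on the stated constant $\gamma_1$. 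Second, your treatment of the break positions via ``no ties within $2\delta_\eps$'' is plausible and consistent with the joint limit in \cite{ABL}, but it is an extra ingredient not spelled out in the paper's proof of Theorem~\ref{thm:intermediateslowpulling_Z}; the same sandwich argument applied per link (as you do for \eqref{eqn:intermediateexittimescaling_Z}) is the cleaner way to transfer \eqref{eqn:positions}.
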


The rest of this paper is structured as follows. As already mentioned, Theorem~\ref{thm:intermediateslowpulling_ub} is proved in Section~\ref{sec:scaling}, while Theorem~\ref{thm:intermediateslowpulling_Z} is proved in Section~\ref{sec:XtoZ}. It is used
to prove Theorem~\ref{thm:nonlinear} in Section~\ref{sec:ZtoXX}. Proofs of various technical results used in these proofs are
collected in Section~\ref{sec:technical}.

\bigskip

\section{Scaling of the standard model}
\label{sec:scaling}

Here we prove Theorem~\ref{thm:intermediateslowpulling_ub}. Recall that the case where $\uu = 1$, $\br = 2$ has been done in
\cite{ABL}. Below, we will refer to this case as the {\em standard problem}.

{\sl Step 1. Scaling of $\uu$.}
We first consider the scaling of $\uu$ with fixed $\br$. Let $\tvecX$ be a solution
of the system \eqref{eqn:original} with parameters $\uu=1$, $\teps$ and $\tsigma$. We make the time change
\[
   \hX^i_t:= \tX^i_{\uu t}.
\]
This family solves the system with parameters $\uu$, $\eps:=\uu\, \teps$, and
$\sigma:=\sqrt{\uu}\, \tsigma$, because $\hX^0_t=\tX^0_{\uu\,t}=0$,
$\hX^d_t=\tX^d_{\uu\, t}= d+ \teps(\uu\, t)=  d+ (\uu\, \teps) t$, and an easy
calculation shows that
\[
 \dd \hX_t^i = \uu(\hX^{i+1}_t+\hX^{i-1}_t -2 \hX^i_t)\dd t
 +\sqrt{\uu}\, \tsigma \dd \hB_t^i,\qquad i=1,\ldots,d-1, t\geq 0,
\]
with appropriate independent Brownian motions $\hB_t^i$.

The above argument shows that for given $\eps,\sigma$,
we have
\[
  \vecX_{\uu,t} (\eps,\sigma) \deq \vecX_{1, \uu\,t} \left(\frac{\eps}{\uu},\frac{\sigma}{\sqrt{\uu}}\right),
\]
the latter being a solution of the standard problem,
yet with modified parameters.

Next, having a relation between the processes, we obtain the relations between the
break times for arbitrary $\br>1$,
\begin{eqnarray} \label{eqn:ub1b}
   \tau^i_{\uu,\br}(\eps,\sigma) &=&  \uu^{-1}\ \tau^i_{1,\br}(\eps/\uu,\sigma/\sqrt{\uu}),
\\
   \tau_{\uu,\br}(\eps,\sigma) &=&  \uu^{-1}\  \tau_{1,\br}(\eps/\uu,\sigma/\sqrt{\uu}).
\end{eqnarray}
 \medskip

{\sl Step 2. Scaling of $\br$.} Now we fix $\uu=1$ and scale $\br$.
Let us fix a break position $i\in \{1,\ldots,d\}$. The break condition is
\[
   X^i_t-X^{i-1}_t =\br.
\]

This is equivalent to
\[
   \sigma V_t^i + \eps \, \frac{t}{d} +\eps\, R^i_t =\br-1 ,
\]
where $V^i$ is an appropriate  asymptotically stationary process, and $R^i$ is a bounded deterministic function,
both defined in \cite{ABL}. Dividing by $\br-1$ yields the equivalent form
\[
   \frac{\sigma}{\br-1}\, V_t^i + \frac{\eps}{\br-1} \, \frac{t}{d} +  \frac{\eps}{\br-1}\, R^i_t = 1,
\]
which coincides with the break condition for the standard case $\br=2$ with parameters
$\tfrac{\eps}{1-\br}, \tfrac{\sigma}{1-\br}$. We conclude that $\tau^i_{1,\br}(\eps,\sigma)$
coincides in distribution with  $\tau^i_{1,2}\left(\tfrac{\eps}{\br-1}, \tfrac{\sigma}{\br-1}\right)$.

{\sl Step 3. Combining two scalings.}
By combining the results of two scalings we see that the vector
 $\tau^i_{\uu,\br}(\eps,\sigma)$, $i=1,\ldots,d$,
has the same distribution as the vector
$\uu^{-1} \tau^i_{1,2}\left(\tfrac{\eps}{\uu(\br-1)}, \tfrac{\sigma}{\sqrt{\uu}(\br-1)})\right)$, $i=1,\ldots,d$.
Subsequently, this also holds for $\tau_{\uu,\br}=\min_i \tau^i_{\uu,\br}$ and $\tau_{1,2}=\min_i \tau_{1,2}^i$.

{\sl Step 4. Weak convergence.}
From \cite{ABL} we know that Theorem~\ref{thm:intermediateslowpulling_ub} is valid in the standard problem.
We apply this version of the theorem to the modified pair of parameters
$\left(\tfrac{\eps}{\uu(\br-1)}, \tfrac{\sigma}{\sqrt{\uu}(\br-1)})\right)$
and obtain
\[
       \frac{\eps}{\sigma\sqrt{\uu}}\, \sqrt{\ln\left( \frac{\sigma\sqrt{\uu}}{\eps}\right)}
        \left(\frac{d\uu(\br-1)}{\eps}
                - \gamma \frac{\sigma\sqrt{\uu}}{\eps}
                \sqrt{\ln\left( \frac{\sigma\sqrt{\uu}}{\eps}\right)}
           -\uu \,\tau^i_{\uu,\br}\left(\eps,\sigma\right)
   \right) \tod \chi_i.
\]
or, equivalently,
\[
       \frac{\eps\sqrt{\uu}}{\sigma}\, \sqrt{\ln\left( \frac{\sigma\sqrt{\uu}}{\eps}\right)}
        \left(\frac{d(\br-1)}{\eps}
                - \gamma \frac{\sigma}{\eps\sqrt{\uu}}
                \sqrt{\ln\left( \frac{\sigma\sqrt{\uu}}{\eps}\right)}
           -\tau^i_{\uu,\br}\left(\eps,\sigma\right)
   \right) \tod \chi_i.
\]

Next, using expansion
\begin{eqnarray*}
   \sqrt{\ln\left(\frac{\sigma\sqrt{\uu}}{\eps}\right)}
  &=&  \sqrt{\ln\left(\frac{\sigma}{\eps}\right) +\frac{\ln \uu}{2}}
   =  \sqrt{\ln\left(\frac{\sigma}{\eps}\right)}\sqrt{ 1+ \frac{\ln \uu}{2\ln\left(\frac{\sigma}{\eps}\right) }}
\\
  &=&  \sqrt{\ln\left(\frac{\sigma}{\eps}\right)} \ \left(1+ \frac{\ln \uu}{4\ln\left(\frac{\sigma}{\eps}\right) }(1+o(1)) \right)
\\
   &=&  \sqrt{\ln\left(\frac{\sigma}{\eps}\right)} + \frac{\ln \uu}{4\sqrt{\ln\left(\frac{\sigma}{\eps}\right)} }(1+o(1))
\end{eqnarray*}
 we obtain
\[
      \frac{\eps\sqrt{\uu}}{\sigma}\, \sqrt{\ln\left( \frac{\sigma}{\eps}\right)}
        \left(\frac{d(\br-1)}{\eps}
                - \gamma \frac{\sigma}{\eps\sqrt{\uu}}
                \sqrt{\ln\left( \frac{\sigma}{\eps}\right)}
           -\tau^i_{\uu,\br}\left(\eps,\sigma\right)
   \right)
     \tod \chi_i + \frac{\gamma \ln \uu}{4}.
\]

Notice that a shifted variable having a double exponential distribution belongs to the same class:
If $\kappa\in\R$ and $\xi$ is double exponential with parameters $a,b$, then $\xi+\kappa$ is also double exponential
with parameters $a\exp\{b\kappa\},b$.

In our case, $\kappa= \tfrac{\gamma\ln \uu }{4}$, hence $\exp\{b\kappa\}= \uu^{b\gamma/4}$.
Furthermore, an easy calculation shows that $b\gamma=\sqrt{2} (vd)^{-1} \cdot \sqrt{2}dv=2$.
Therefore, $\exp\{b\kappa\}= \sqrt{\uu}$. We finally obtain
\begin{equation} \label{eqn:iets6}
         \frac{\eps\sqrt{\uu}}{\sigma}\, \sqrt{\ln\left( \frac{\sigma}{\eps}\right)}
        \left(\frac{d(\br-1)}{\eps}
                - \gamma \frac{\sigma}{\eps\sqrt{\uu}}
                \sqrt{\ln\left( \frac{\sigma}{\eps}\right)}
           -\tau^i_{\uu,\br}\left(\eps,\sigma\right)
   \right)
     \tod \chi_i(\uu),
\end{equation}
where $\chi_i(\uu)$ is double exponential with parameters $\sqrt{\uu}a_i,b$.

The latter equation coincides with the claim \eqref{eqn:intermediateexittimescaling_ub}.
The proof of \eqref{eqn:intermediateexittimescaling-tau_ub} is exactly the same.
Relation  \eqref{eqn:positions} follows as a by-product of the scaling reductions.
Therefore, the proof of Theorem~\ref{thm:intermediateslowpulling_ub} is finished.

\begin{rem} \label{rem:uniform} {\rm We see from the proof that the weak convergence
in \eqref{eqn:intermediateexittimescaling_ub}
and \eqref{eqn:intermediateexittimescaling-tau_ub} is locally uniform in $\br$
(as long as $\br$ is bounded away from $1$) for every fixed $\uu$.
We will use this fact later on.}
\end{rem}

\section{Comparing the Gaussian processes $\vecX_{\uu,\br}$ and $\vecZ$}
\label{sec:XtoZ}

Before we start with the proof of Theorem~\ref{thm:intermediateslowpulling_Z},
we need two auxiliary results. The first one shows that $\vecZ$ has no early breaks.
Recall the notation $t_\ast= t_\ast(\eps,\br) := d (\br-1) / \varepsilon$.

\begin{lem}\label{lem:noearlyZ} There exists $\gamma_1=\gamma_1(d,U(\cdot))$ such that
\begin{equation} \label{eqn:noearlyZ}
  \P\left(  \tau_{Z,\br}(\eps,\sigma) < t_\ast
     - \gamma_1 \, \frac{\sigma}{\eps} \sqrt{\ln\left(\frac{\sigma}{\eps}\right)}
   \right) \to 0, \qquad \textrm{ as } \eps,\sigma\to 0.
\end{equation}
\end{lem}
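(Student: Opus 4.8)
The plan is to reduce the time-dependent-coefficient process $\vecZ$ to a comparison with the constant-coefficient Gaussian process $\vecX_{\uu^-,\cdot}$ for a suitable lower bound $\uu^- \le \phi(t)$, and then invoke the already-established Theorem~\ref{thm:intermediateslowpulling_ub} (together with its local uniformity in $\br$, Remark~\ref{rem:uniform}) to conclude that the latter has no early breaks. Concretely: by Assumption $\U$, $\phi(t) = U''(q_t)$ with $q_t = 1+\eps t/d$, so on the time interval $[0,t_\ast]$ we have $q_t \in [1,\br]$ and hence $\phi(t) \in [\uu^-,\uu^+]$ with $\uu^- := \min_{[1,\br]} U'' > 0$ and $\uu^+ := \max_{[1,\br]} U''$ finite, both depending only on $d$ and $U$. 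The first step is to show that a smaller coefficient in the restoring force makes early breaks \emph{more} likely, so that $\tau_{Z,\br}$ stochastically dominates $\tau_{\uu^-,\br}$ (or at least satisfies $\P(\tau_{Z,\br} < s) \le \P(\tau_{\uu^-,\br} < s)$ for the relevant $s$); this is a monotone-comparison statement for the linear SDE systems \eqref{eqn:original_phi} and \eqref{eqn:original} driven by the same Brownian motions.

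The second step is purely about the constant-coefficient system: apply Theorem~\ref{thm:intermediateslowpulling_ub} with curvature $\uu^-$, which gives $\tau_{\uu^-,\br}(\eps,\sigma) = t_\ast(\eps,\br) - \gamma\frac{\sigma}{\sqrt{\uu^-}\eps}\sqrt{\ln(\sigma/\eps)} + O_\P\!\big(\frac{\sigma}{\eps\sqrt{\ln(\sigma/\eps)}}\big)$. In particular, for any constant $\gamma_1 > \gamma/\sqrt{\uu^-}$ the probability $\P\big(\tau_{\uu^-,\br} < t_\ast - \gamma_1\frac{\sigma}{\eps}\sqrt{\ln(\sigma/\eps)}\big) \to 0$, since the fluctuation term is of lower order than $\frac{\sigma}{\eps}\sqrt{\ln(\sigma/\eps)}$ and the deterministic shift $\gamma\frac{\sigma}{\sqrt{\uu^-}\eps}\sqrt{\ln(\sigma/\eps)}$ is smaller than $\gamma_1\frac{\sigma}{\eps}\sqrt{\ln(\sigma/\eps)}$. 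Setting $\gamma_1 := \gamma_1(d,U) := 2\gamma/\sqrt{\uu^-}$ (say) and combining with Step~1 yields \eqref{eqn:noearlyZ}. Note the scaling regime \eqref{eqn:stretch_Z} is contained in \eqref{eqn:stretch}, so Theorem~\ref{thm:intermediateslowpulling_ub} applies.

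The main obstacle is Step~1, the stochastic comparison: the systems \eqref{eqn:original} and \eqref{eqn:original_phi} are coupled linear SDEs, and one needs that replacing $\phi(t) \ge \uu^-$ by the smaller constant $\uu^-$ does not decrease any gap $Z^i_t - Z^{i-1}_t$, uniformly in $i$ and $t \le t_\ast$. Naively the restoring force $\phi(t)(Z^{i+1}+Z^{i-1}-2Z^i)$ is not monotone in $\phi$ pointwise because its sign depends on the local configuration, so a direct pathwise comparison is delicate. A cleaner route is to work with the centered process: write $Z^i_t = i\,q_t + \sigma Y^i_t$ (or more precisely subtract the deterministic energy-minimizing profile plus a correction $R$, as in the decomposition used in Section~\ref{sec:scaling}), whereupon $Y$ solves a linear system with the same coefficient $\phi(t)$ but \emph{homogeneous} structure, and the gaps become $\br - 1 - \eps\,(\text{drift and correction terms}) = \sigma\cdot(\text{linear functional of }Y)$. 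One then compares the Ornstein--Uhlenbeck-type process $Y$ with time-varying rate $\phi(t)\in[\uu^-,\uu^+]$ to the stationary one with rate $\uu^-$ via a variance/Slepian-type argument (the relevant Gaussian processes have explicitly comparable covariance structures, since a larger rate means faster mean reversion and smaller variance), rather than a pathwise SDE comparison. Given the tools already in play in \cite{ABL} for handling exactly these Gaussian processes, this comparison should be routine though somewhat technical; it is plausibly deferred to Section~\ref{sec:technical}. Alternatively, one can bypass comparison entirely by a crude direct estimate: bound $\max_i (Z^i_t - Z^{i-1}_t)$ above on $[0, t_\ast - \gamma_1\frac{\sigma}{\eps}\sqrt{\ln(\sigma/\eps)}]$ using the explicit Gaussian representation of $\vecZ$, controlling the supremum of the centered Gaussian field by its maximal variance (which is $O(\sigma^2)$) times a logarithmic factor $O(|\ln\eps|)$, so that the fluctuations are $O(\sigma\sqrt{|\ln\eps|}) = o\big(\frac{\sigma}{\eps}\sqrt{\ln(\sigma/\eps)}\big)$, while the deterministic part of the gap is $\br - 1 - \frac{\eps}{d}\cdot(\text{time}) - O(\eps)$, which stays bounded away from $\br-1$ by order $\frac{\sigma}{\eps}\sqrt{\ln(\sigma/\eps)}\cdot\frac{\eps}{d} = \frac{\sigma}{d}\sqrt{\ln(\sigma/\eps)} \gg \sigma\sqrt{|\ln\eps|}$ on the truncated interval. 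This second approach avoids monotonicity issues and is the one I would pursue.
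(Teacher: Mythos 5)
Your fallback plan (the ``crude direct estimate'', which you say you would pursue) contains a genuine gap: it hinges on the claim that the deterministic distance to the break level on the truncated interval, which is of order $\tfrac{\gamma_1\sigma}{d}\sqrt{\ln(\sigma/\eps)}$, dominates the global fluctuation bound $O(\sigma\sqrt{|\ln\eps|})$. This inequality goes the wrong way: one always has $\ln(\sigma/\eps)\le|\ln\eps|$, and within the regime \eqref{eqn:stretch_Z} the ratio $\ln(\sigma/\eps)/|\ln\eps|$ can tend to zero (take e.g.\ $\sigma=\eps\ln(1/\eps)$), so no fixed $\gamma_1$ makes $\sigma\sqrt{\ln(\sigma/\eps)}\gg\sigma\sqrt{|\ln\eps|}$. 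Moreover the bound $O(\sigma\sqrt{|\ln\eps|})$ for the supremum of the centered field over the whole horizon of length $t_\ast\approx\eps^{-1}$ is essentially sharp (it is the maximum over $\sim\eps^{-1}$ weakly dependent unit blocks), so a uniform-in-time comparison of ``worst threshold near $t$'' against ``worst fluctuation over $[0,t]$'' cannot close the argument. What is needed --- and what the paper does --- is to exploit that the distance to the break level grows linearly (with slope $\eps/d$) as one moves back from $t_\ast$: writing, via the representation of Lemma~\ref{lem:represenation}, $Z^i_s-Z^{i-1}_s=q_s+\eps\Delta^i_s+V^i_s$ with $\Delta^i$ bounded and $V^i$ a centered Gaussian process with variance $O(\sigma^2)$ and Hölder increments, a break at time $s\le t$ forces $V^i_s\ge\tfrac{\eps}{d}(t_\ast-s)-O(\eps)$; one then cuts $[0,t]$ into unit intervals, applies the sup-tail bound \eqref{eqn:GaussLD} with the interval-dependent threshold $\tfrac{\eps}{d}(t_\ast-t+k)-O(\eps)$, and sums the resulting geometric series. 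The leading term is $(\sigma/\eps)^{-C_3\gamma_1^2/d^2}$ and the series prefactor is of order $\sigma/(\eps\sqrt{\ln(\sigma/\eps)})$, so choosing $\gamma_1$ large enough that $C_3\gamma_1^2/d^2>1$ (which is why $\gamma_1$ depends on $d$ and $U$, not merely $\gamma_1>\gamma/\sqrt{\uu}$) gives \eqref{eqn:noearlyZ}. Your starting point (the Gaussian representation with bounded deterministic correction) is exactly right; the missing idea is the varying-threshold blockwise union bound.

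Your first route --- stochastic domination of $\tau_{Z,\br}$ by $\tau_{\uu^-,\br}$ with $\uu^-=\min_{[1,\br]}U''$ and then Theorem~\ref{thm:intermediateslowpulling_ub} --- is not what the paper does and is left unproven in your proposal; as you yourself note, a pathwise comparison fails because the restoring force is not monotone in the coefficient, and a Slepian-type argument is not immediate here since the processes with rates $\phi(t)$ and $\uu^-$ have different variances and the event involves level crossings of drifted differences rather than a plain supremum of a single centered field. As it stands, neither of your two routes yields a complete proof.
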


From Theorem~\ref{thm:intermediateslowpulling_ub} we know that the same statement
is true for $\vecX_\uu$, namely, for any $\gamma_1>\gamma/\sqrt{\uu}$,
\begin{equation} \label{eqn:noearlyXu}
  \P\left(  \tau_{\uu,\br}(\eps,\sigma) < t_\ast
     - \gamma_1 \, \frac{\sigma}{\eps} \sqrt{\ln\left(\frac{\sigma}{\eps}\right)}
   \right) \to 0.
\end{equation}
But we also need \eqref{eqn:noearlyZ} for $\vecZ$.
The proof of Lemma~\ref{lem:noearlyZ} is given in Section~\ref{sec:noearlyZ}.

The second auxiliary lemma shows that $\vecZ$ and $\vecX_\uu$ are close on an important time interval.
\begin{lem} \label{lem:ZXclose}
Let $\gamma_1>0$  and $\theta>0$. Assume \eqref{eqn:stretch_Z}.
Denote $\delta=\delta(\eps,\sigma):= \frac{\theta\,\sigma}{\sqrt{\ln\left(\frac{\sigma}{\eps}\right)}}$.
Then,  as $\eps,\sigma\to 0$, for every $i=0,1,..,d$, we have
\begin{equation} \label{eqn:ZXclose}
    \P\left(  \sup_{t\in [t_\ast - \gamma_1 \, \frac{\sigma}{\eps} \sqrt{\ln\left(\frac{\sigma}{\eps}\right)}, t_\ast]}
   |Z^i_t-X^i_{\uu,t}|>\delta \right) \to 0.
\end{equation}
\end{lem}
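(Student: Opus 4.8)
The statement for $i\in\{0,d\}$ is vacuous, since there $Z^i_t-X^i_{\uu,t}\equiv 0$; so fix $i\in\{1,\dots,d-1\}$. The plan is to couple $\vecZ$ and $\vecX_\uu$ through the \emph{same} Brownian motions $(B^j)_{j}$ and analyse the difference $\boldsymbol D_t:=(Z^j_t-X^j_{\uu,t})_{j=1}^{d-1}$. Because the noise then cancels, $\boldsymbol D$ solves a linear ODE that is deterministic given the noise. Writing $\boldsymbol W_t:=(X^j_{\uu,t}-j\,q_t)_{j=1}^{d-1}$ for the fluctuation of $\vecX_\uu$ around the equidistant profile (so that $W^0_t=W^d_t=0$) and letting $\Delta$ be the discrete Dirichlet Laplacian on $\{1,\dots,d-1\}$, one checks, using that the affine profile $(j\,q_t)_j$ is annihilated by $\Delta$, that for $j=1,\dots,d-1$
\[
   \tfrac{\d}{\d t}D^j_t=\phi(t)\,(\Delta\boldsymbol D_t)^j+(\phi(t)-\uu)\,(\Delta\boldsymbol W_t)^j,\qquad D^j_0=D^0_t=D^d_t=0 .
\]
Here $(\Delta\boldsymbol W_t)^j=g^{j+1}_t-g^{j}_t$ is a difference of the gap fluctuations $g^j_t=X^j_{\uu,t}-X^{j-1}_{\uu,t}-q_t$ of the Gaussian process $\vecX_\uu$, which by the analysis of \cite{ABL} have the form $\sigma V^j_t+\eps R^j_t$ with $V^j$ asymptotically stationary Ornstein--Uhlenbeck-type processes and $R^j$ bounded deterministic functions.

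Since the coefficient $\phi(t)$ is a scalar, the operators $\phi(t)\Delta$ commute for different $t$, so the fundamental solution of the homogeneous equation is $\exp\!\bigl(\Delta\int_s^t\phi(r)\,\d r\bigr)$. Assumption $\U$ gives $\phi(t)=U''(q_t)\ge c_0:=\min_{[1,\br]}U''>0$ for $t\le t_\ast$, while $\Delta\preceq-\lambda_{\min}\,\mathrm{Id}$ with $\lambda_{\min}=4\sin^2(\pi/(2d))>0$; hence $\bigl\|\exp(\Delta\int_s^t\phi)\bigr\|_{\mathrm{op}}\le e^{-\rho(t-s)}$ for $0\le s\le t\le t_\ast$, with $\rho:=c_0\lambda_{\min}>0$. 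Duhamel's formula then gives the pathwise estimate
\[
   |\boldsymbol D_t|\ \le\ \int_0^t e^{-\rho(t-s)}\,|\phi(s)-\uu|\,|\Delta\boldsymbol W_s|\,\d s,\qquad t\le t_\ast .
\]
The two factors are controlled separately: since $|q_s-\br|=\tfrac{\eps}{d}(t_\ast-s)$ and $U''\in C^1$ on $[1,\br]$, we have $|\phi(s)-\uu|\le L\,\tfrac{\eps}{d}(t_\ast-s)$ with $L:=\sup_{[1,\br]}|U'''|$; and $|\Delta\boldsymbol W_s|\le C\bigl(\sigma\max_j|V^j_s|+\eps\bigr)$.

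The bulk of the work is to upgrade this to the uniform bound \eqref{eqn:ZXclose}. I would split the integral at $t-T$ with $T:=\tfrac{2}{\rho}\ln|\ln\eps|$. On $[0,t-T]$ the kernel contributes $e^{-\rho T}=|\ln\eps|^{-2}$, the coefficient is bounded by $2\max_{[1,\br]}U''$, and extremal estimates for Ornstein--Uhlenbeck processes over the horizon $t_\ast\asymp 1/\eps$ give $\sup_{s\le t_\ast}|\Delta\boldsymbol W_s|=O(\sigma\sqrt{|\ln\eps|})$ with probability tending to $1$; the product is $O(\sigma|\ln\eps|^{-3/2})=o(\delta)$. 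On $[t-T,t]\subseteq[\,t_\ast-\gamma_1\tfrac{\sigma}{\eps}\sqrt{\ln(\sigma/\eps)}-T,\ t_\ast\,]$ the point $q_s$ lies within $O\bigl(\sigma\sqrt{\ln(\sigma/\eps)}\bigr)$ of $\br$, so $|\phi(s)-\uu|=O\bigl(\sigma\sqrt{\ln(\sigma/\eps)}\bigr)$, while the supremum of $|\Delta\boldsymbol W_s|$ over that interval (of length $\asymp\tfrac{\sigma}{\eps}\sqrt{\ln(\sigma/\eps)}$) is $O\bigl(\sigma\sqrt{\ln(\sigma/\eps)}\bigr)$ with high probability; the resulting contribution is of order $\sigma^2$ times a bounded power of $\ln(\sigma/\eps)$, which is exactly what the scaling assumption \eqref{eqn:stretch_Z} is designed to render $o(\delta)$. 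Uniformity in $t$ over the window is obtained by discretising $t$ on a unit grid and applying Gaussian concentration (Borell--TIS) to the linear, hence Gaussian, functionals $\boldsymbol D_t$ of the white noise at the $O\bigl(\tfrac{\sigma}{\eps}\sqrt{\ln(\sigma/\eps)}\bigr)$ grid points, the union bound costing only a further $\sqrt{\ln(\sigma/\eps)}$ factor that has been accounted for above; between grid points $\boldsymbol D_t$ moves by $O(\sigma)$. The delicate point — and the place where the precise exponents of $|\ln\eps|$ in \eqref{eqn:stretch_Z} are actually needed — is this bookkeeping of logarithmic factors in the recent-past term together with the uniformity in $t$; the underlying Ornstein--Uhlenbeck supremum bounds are standard and may be taken from the technical section.
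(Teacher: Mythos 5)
Your argument is essentially correct, but it takes a genuinely different technical route from the paper. The paper diagonalizes the drift matrix and reduces the comparison to scalar pairs $(\newY^j,\ZZZ^j)$ driven by the same Brownian motion (Lemmas~\ref{lem:represenation} and \ref{lem:winternoteslemma}); there the variance of the difference is computed exactly from the explicit Ornstein--Uhlenbeck formulas, giving $\E[(\newY_t-\ZZZ_t)^2]=\sigma^2\bigl(O(\sigma^2\psi)+O(\eps|\ln\eps|)\bigr)$ with $\psi=\ln(\sigma/\eps)$, and uniformity over the window is then obtained by Borell--TIS concentration plus a Dudley entropy bound and a union bound over unit intervals. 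You instead keep the coupled difference $\boldsymbol D_t$ as a vector, observe that the noise cancels, and run a pathwise Duhamel estimate with the semigroup decay $e^{-\rho(t-s)}$, a near/far splitting of the integral, the Lipschitz bound $|\phi(s)-\uu|\lesssim\eps(t_\ast-s)$, and standard sup bounds for the Gaussian forcing $\Delta\boldsymbol W$. What your route buys is that no diagonalization, no covariance computation and no Dudley bound for the difference process are needed, and uniformity in $t$ is automatic from the pathwise integral bound; what it loses is a factor $\sqrt{\psi}$ at the pointwise level (your sup-in-the-integrand bound gives $O(\sigma^2\psi)$ where the true standard deviation is $O(\sigma^2\sqrt{\psi})$), which, as it happens, costs nothing in the end because the paper pays the same $\sqrt{\psi}$ in its union bound over the $\asymp(\sigma/\eps)\sqrt{\psi}$ unit intervals.

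Two points should be tightened. First, your closing claim that the recent-past term, of order $\sigma^2\psi$, is ``exactly what \eqref{eqn:stretch_Z} is designed to render $o(\delta)$'' needs care: $\sigma^2\psi\ll\theta\sigma/\sqrt{\psi}$ amounts to $\sigma^2\bigl(\ln(\sigma/\eps)\bigr)^3\to0$, which does not follow from the literal condition $\sigma^2|\ln\eps|^{3/2}\to0$ in \eqref{eqn:stretch_Z} (take $\sigma=|\ln\eps|^{-4/5}$); it does hold under \eqref{eqn:stretch_vb}, since $\psi\le|\ln\eps|$. You are in the same position as the paper here, whose proof of Lemma~\ref{lem:winternoteslemma} invokes the smallness of exactly the same quantity $\sigma^2\psi^3$ at its concentration step, so this should be stated as the precise condition used rather than waved through. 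Second, the aside about discretizing $t$ on a unit grid and applying Borell--TIS to $\boldsymbol D_t$, with $\boldsymbol D_t$ moving by $O(\sigma)$ between grid points, is both unnecessary and, as stated, too crude (an $O(\sigma)$ oscillation would swamp $\delta=\theta\sigma/\sqrt{\psi}$): your Duhamel bound is already valid simultaneously for all $t$, so uniformity in $t$ comes for free once the sup bounds on $|\Delta\boldsymbol W_s|$ over the relevant $s$-ranges (the only place where a grid and Gaussian tail bounds à la \eqref{eqn:GaussLD} are needed) are in place; simply delete that remark.
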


The proof of Lemma~\ref{lem:ZXclose} is given in Section~\ref{sec:ZXclose}.
\bigskip

\begin{proof}[ of Theorem~\ref{thm:intermediateslowpulling_Z}]
Let $\gamma_1$  be chosen large enough to satisfy Lemma~\ref{lem:noearlyZ}.
Let
$t\in [t_\ast - \gamma_1 \, \frac{\sigma}{\eps} \sqrt{\ln\left(\frac{\sigma}{\eps}\right)},t_\ast]$ be such that
\[
   \max_{1\leq i\leq d}  \left(Z^{i}_t-Z^{i-1}_t\right) \geq \br.
\]
If, additionally,
\begin{equation} \label{eqn:ZXdelta}
  \max_{0\leq i\leq d} |Z^i_t-X^i_{\uu,t}|<\delta,
\end{equation}
 then
\[
   \max_{1\leq i\leq d} \left( X^{i}_{\uu,t}-X^{i-1}_{\uu,t}\right) \geq \br-2\delta.
\]
It follows that
$\tau_{\uu,\br-2\delta} \leq t$. By using Lemma~\ref{lem:noearlyZ} and Lemma~\ref{lem:ZXclose} we obtain
\[
  \P\left( \tau_{Z,\br}\leq t \right) \leq \P\left( \tau_{\uu,\br-2\delta}\leq t \right) +o(1)
\]
with $o(1)$ uniformly over
$t\in [t_\ast - \gamma_1 \, \frac{\sigma}{\eps} \sqrt{\ln\left(\frac{\sigma}{\eps}\right)}, t_\ast]$.

Let $r\in \R$ be fixed. The previous equation and the definition of $\delta$ yield
\begin{eqnarray*}
   P_r &:=&
   \P\left(
     \frac{\sqrt{\uu}\, \eps}{\sigma}\, \sqrt{\ln(\sigma/\eps)}
        \left( t_\ast - \gamma \frac{\sigma}{\sqrt{\uu} \eps} \sqrt{\ln (\sigma/\eps)}
        -\tau_{Z,\br}(\eps,\sigma) \right) \geq r
   \right)
\\
  &=&    \P\left( \tau_{Z,\br}(\eps,\sigma) \leq t_\ast
       -  \gamma \frac{\sigma}{\sqrt{\uu} \eps} \sqrt{\ln (\sigma/\eps)}
       - \frac{r\sigma} {\sqrt{\uu}\, \eps \sqrt{\ln(\sigma/\eps)}}
         \right)
\\
    &\leq&    \P\left( \tau_{\uu,\br-2\delta}(\eps,\sigma) \leq t_\ast
       -  \gamma \frac{\sigma}{\sqrt{\uu} \eps} \sqrt{\ln (\sigma/\eps)}
       - \frac{r\sigma} {\sqrt{\uu}\, \eps \sqrt{\ln(\sigma/\eps)}}
         \right) + o(1)
\\
    &=&    \P\left( \tau_{\uu,\br-2\delta}(\eps,\sigma) \leq
        \frac{d(\br-2\delta-1)}{\eps} +  \frac{2d\delta}{\eps}
       -  \gamma \frac{\sigma}{\sqrt{\uu} \eps} \sqrt{\ln (\sigma/\eps)}
       - \frac{r\sigma} {\sqrt{\uu}\, \eps \sqrt{\ln(\sigma/\eps)}}
         \right) + o(1),
\\
    &=&    \P\left( \tau_{\uu,\br-2\delta}(\eps,\sigma) \leq
        t_\ast(\eps,\br-2\delta)
       -  \gamma \frac{\sigma}{\sqrt{\uu} \eps} \sqrt{\ln (\sigma/\eps)}
       - \frac{(r+2d\theta \sqrt{\uu})\sigma} {\sqrt{\uu}\, \eps \sqrt{\ln(\sigma/\eps)}}
         \right) + o(1),
\\
      &=& \P\left( \frac{\sqrt{\uu}\, \eps}{\sigma}\, \sqrt{\ln(\sigma/\eps)}
        \left( t_\ast(,\eps,\br -2\delta) - \gamma \frac{\sigma}{\sqrt{\uu} \eps} \sqrt{\ln (\sigma/\eps)}
        -\tau_{\uu,\br-2\delta}(\eps,\sigma) \right) \geq r+2d\theta \sqrt{\uu}
         \right) + o(1),
\end{eqnarray*}
By applying Theorem~\ref{thm:intermediateslowpulling_ub} with $\widetilde{\br}:= \br-2\delta$ instead of $\br$, we
obtain\footnote{We stress that $\widetilde{\br}$ slightly depends of $\eps,\sigma$ through $\delta$. Therefore, we need a uniform
version of the theorem, cf. Remark~\ref{rem:uniform}.}
\[
  \limsup_{\eps,\sigma\to 0} P_r \leq \P(\chi_0(\uu) \geq r+2d\theta \sqrt{\uu}).
\]
Finally, by letting $\theta\to 0$ we obtain the desired upper bound
\[
  \limsup_{\eps,\sigma\to 0} P_r \leq \P(\chi_0(\uu) \geq r).
\]
The lower bound follows by the same lines.
If for some $t\in [t_\ast - \gamma_1 \, \frac{\sigma}{\eps} \sqrt{\ln\left(\frac{\sigma}{\eps}\right)}, t_\ast]$
we have
\[
   \max_{1\leq i\leq d} \left( X^{i}_{\uu,t}-X^{i-1}_{\uu,t}\right) \geq \br + 2\delta
\]
and \eqref{eqn:ZXdelta} holds, then
\[
   \max_{1\leq i\leq d}  \left(Z^{i}_t-Z^{i-1}_t\right) \geq \br.
\]
In other words, $\tau_{Z,\br} \leq t$.

By using \eqref{eqn:noearlyXu} and Lemma~\ref{lem:ZXclose} we obtain
\[
  \P\left( \tau_{Z,\br}\leq t \right) \geq \P\left( \tau_{\uu,\br+2\delta}\leq t \right) -o(1),
\]
and the rest of the derivation leading to
\[
  \liminf_{\eps,\sigma\to 0} P_r \geq \P(\chi_0(\uu) \geq r).
\]
continues as above, $\br-2\delta$ being replaced with $\br+2\delta$.
\end{proof}

\section{Break times for the non-linear system}
\label{sec:ZtoXX}

In order to obtain information about the process $\vecXX$ from the analysis of $\vecZ$,
we must investigate the difference between  $\vecZ$ and $\vecXX$ .
We first need some definitions and notations, assuming that $\vecZ$ and $\vecXX$ are defined
on the same probability space, and are driven by the same Brownian motions.
On this probability space we define the stochastic processes
\[
   S_t^\ast :=  \sup_{0 \leq s \leq t} \| \vecZ_s - \vecXX_s\|_2
   = \sup_{0 \leq s \leq t} \Big( \sum_{i=0}^d | Z_s^i - \XX_s^{i}|^2 \Big)^{1/2}
   = \sup_{0 \leq s \leq t} \Big( \sum_{i=1}^{d-1} | Z_s^i - \XX_s^{i}|^2 \Big)^{1/2}
\]
and
\[
   M^\ast_t := \sup_{0 \leq s \leq t} \Big( \sum_{i=1}^d | Z_s^i - Z_s^{i-1} - q_s|^2 \Big)^{1/2},
\]
where we recall that $q_s= 1+\tfrac{\eps s}{d}$.

The key approximation result is as follows.

\begin{prop} \label{prop:vbgeneral}
Let $r$ be as in Assumption $\U$. Then  there exists a large constant $C$ depending on $d$ and
on the potential $U$  such that for all  $t \leq t_\ast$ and all $\delta>0$ we have
\begin{equation} \label{eqn:vbgeneral}
   \P(S_t^*\geq \delta)
   \leq \P(M_t^*\geq \sqrt{\delta}/C)
        +\P(M_t^*+4C(M_t^*)^2\geq r)+\P(C M_t^*\geq 1).
\end{equation}
\end{prop}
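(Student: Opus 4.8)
The plan is to write an integral equation for the difference $\vecW_t := \vecZ_t - \vecXX_t$ and bound its supremum norm pathwise in terms of $M_t^\ast$. Both $\vecZ$ and $\vecXX$ are driven by the same Brownian motions and have the same boundary components ($i=0,d$), so the stochastic integrals cancel in the difference and $\vecW_t$ satisfies a \emph{random ODE} (no noise term) with $W_0^i=0$ for all $i$. The drift of $\vecW^i$ is the difference of the two force terms: for $\vecZ$ the force on site $i$ is $\phi(t)\,(Z^{i+1}_t+Z^{i-1}_t-2Z^i_t)$ with $\phi(t)=U''(q_t)$, while for $\vecXX$ it is $U'(\XX^{i+1}_t-\XX^i_t)-U'(\XX^i_t-\XX^{i-1}_t)$. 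The idea is to Taylor expand each $U'(\XX^{j+1}_t-\XX^j_t)$ around the reference gap $q_t$, using $\XX^{j+1}_t-\XX^j_t = q_t + (Z^{j+1}_t-Z^j_t-q_t) - (W^{j+1}_t-W^j_t)$. Writing $g_t^j := Z^{j+1}_t-Z^j_t - q_t$ (so that $\sum_j |g_t^j|^2 \le (M_t^\ast)^2$ by definition) and expanding to first order with integral remainder, the linear-in-$\vecW$ part produces exactly a term $\phi(t)(W^{i+1}_t+W^{i-1}_t-2W^i_t)$ which matches the linear structure, and everything else is a remainder controlled by $U'''$ (bounded on a neighbourhood of $[1,\br]$ by Assumption~$\U$) times quadratic expressions in $g_t^j$ and $W_t^j$.

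The second step is a Gr\"onwall-type estimate. Set $N_t := \sup_{0\le s\le t}\|\vecW_s\|_2 = S_t^\ast$. The matrix implementing $v\mapsto (v^{i+1}+v^{i-1}-2v^i)_i$ on the interior coordinates is the discrete Dirichlet Laplacian, which is negative semidefinite; since $\phi(t)=U''(q_t)>0$ on the relevant range, the linear part is \emph{dissipative} and contributes nothing that grows $N_t$ — it can simply be dropped after testing against $\vecW_t$ itself (i.e.\ looking at $\tfrac{d}{dt}\|\vecW_t\|_2^2$). What remains is $\tfrac{d}{dt}\|\vecW_t\|_2^2 \le c\,\|\vecW_t\|_2 \cdot \big( \|\vecW_t\|_2^2 + \|\vecW_t\|_2\,M_t^\ast + (M_t^\ast)^2 \big)$ for a constant $c=c(d,U)$, valid \emph{as long as the relevant gaps stay in the region where the Taylor bound with bounded $U'''$ applies}, i.e.\ roughly while $M_t^\ast$ and $N_t$ are both small. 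Integrating this differential inequality (with $N_0=0$) on the event where it is in force yields a bound of the form $S_t^\ast \le C\,(M_t^\ast)^2$ plus lower-order terms, or more crudely $S_t^\ast \le C\sqrt{\delta}\cdot M_t^\ast$-type comparisons; this is where the $\sqrt{\delta}/C$ threshold in the first term on the right of \eqref{eqn:vbgeneral} comes from.

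The third and most delicate step is bookkeeping the "region of validity" so as to produce exactly the three error events on the right of \eqref{eqn:vbgeneral}. The Taylor expansion of $U'$ is only legitimate while the actual gaps $\XX^{j+1}_t-\XX^j_t$ stay inside the interval $[1,\br]$ (or a fixed neighbourhood of it) on which $U'''$ is bounded; since the gap is $q_t + g_t^j - (W^{j+1}_t - W^j_t)$, with $q_t\le\br$, this holds once $|g_t^j| + |W^{j+1}_t-W^j_t|$ is small, which is controlled by $M_t^\ast$ and $S_t^\ast$. One runs the Gr\"onwall argument up to the first time either $M_t^\ast$ exceeds a threshold or $S_t^\ast$ does; on the complement of $\{M_t^\ast \ge \sqrt{\delta}/C\}$, the Gr\"onwall bound forces $S_t^\ast$ to stay below $\delta$ \emph{unless} the auxiliary smallness conditions $C M_t^\ast < 1$ (needed, say, to solve the quadratic Gr\"onwall inequality / keep the denominator positive) and $M_t^\ast + 4C(M_t^\ast)^2 < r$ (needed to keep the gaps inside the good interval, here the remainder term $4C(M_t^\ast)^2$ is the $S_t^\ast$-contribution rebounded via the just-derived bound) are violated; those violations are precisely the remaining two events. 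I expect this last step — choosing the constants and the stopping argument so that the validity region, the Gr\"onwall output, and the three probabilities line up exactly as written — to be the main obstacle; the Taylor expansion and the dissipativity of the Laplacian are routine, but the circular dependence ($S_t^\ast$ bounded in terms of $M_t^\ast$, but the bound only valid while $S_t^\ast$ is itself small) has to be unwound carefully via a continuity/stopping-time argument.
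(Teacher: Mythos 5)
Your overall architecture is the same as the paper's: set $\vecWW_t=\vecXX_t-\vecZ_t$, note that the noise cancels so $\vecWW$ solves a random ODE started at $0$, Taylor expand $U'$ around $q_t$ with $U'''$ bounded on a neighbourhood $[1,\br+r]$ of the relevant interval up to the stopping time at which some gap of $\vecXX$ reaches $\br+r$, deduce a self-bounding quadratic inequality of the form $S_t^\ast\leq C\bigl((S_t^\ast)^2+(M_t^\ast)^2\bigr)$ on $\{t\le t_r\}$ (this is the paper's Proposition~\ref{prop:comparison}), and then unwind it by the continuity argument you describe: $s\mapsto S_s^\ast$ starts at $0$ and cannot jump across the interval between the two roots of the quadratic, which is solvable exactly when $C M_t^\ast$ is below a fixed threshold; the three events on the right of \eqref{eqn:vbgeneral} arise precisely as you anticipate (quadratic-root bound, exit of a gap from the good interval at $t_r$, and failure of solvability).

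There is, however, one genuine flaw in your second step: the assertion that the dissipative linear term ``can simply be dropped'' after testing against $\vecWW_t$. If you drop it, integrating $\tfrac{d}{dt}\|\vecWW_t\|_2^2\leq c\,\|\vecWW_t\|_2\,(\cdots)$ from $0$ to $t$ produces a bound that grows with the length of the time interval (at best linearly in $t$), so the constant in your key inequality would depend on $t$, hence on $t_\ast=d(\br-1)/\eps\to\infty$. The proposition needs $C=C(d,U)$ uniformly for all $t\le t_\ast$, and in the subsequent application (Proposition~\ref{prop:vbspecial}) a loss of a factor of order $t_\ast\sim 1/\eps$ would effectively replace the threshold $\sqrt{\delta}/C$ by something of order $\sqrt{\delta\eps}$, which is far below the typical size of $M_{t_\ast}^\ast$ (of order $\sigma\sqrt{|\ln\eps|}$), so the first probability in \eqref{eqn:vbgeneral} would no longer tend to zero. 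The strict negativity of the top eigenvalue $\lambda_1$ of the discrete Dirichlet Laplacian must be used quantitatively, not merely to discard a term: either keep the term $-2u_2|\lambda_1|\,\|\vecWW_t\|_2^2$ in your energy estimate and absorb the remainder into it (which yields a bound by a constant times the running supremum of the quadratic remainder, uniformly in $t$), or argue as the paper does via the variation-of-constants formula and the elementary estimate $\int_0^t e^{(t-s)u_2\lambda_1}\,\dd s\le (u_2|\lambda_1|)^{-1}$. With that correction your plan goes through and coincides, in substance, with the paper's proof.
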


The advantage of this proposition is that it evaluates the difference
of the two processes in terms of the Gaussian process alone.
Its proof is given in Section~\ref{sec:proof_prop:vbgeneral}.

Based on this general estimate, we obtain a specific bound suited for the theorem's proof.

\begin{prop} \label{prop:vbspecial}
Assume that \eqref{eqn:stretch_vb} holds.
Let $\theta>0$ and
$\delta=\delta(\eps,\sigma):= \frac{\theta\,\sigma}{\sqrt{\ln\left(\frac{\sigma}{\eps}\right)}}$.
 Then
\begin{equation}
   \lim_{\eps,\sigma\to 0} \P(S_{t_\ast}^*\geq \delta)=0.
\end{equation}
\end{prop}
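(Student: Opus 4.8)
The plan is to deduce Proposition~\ref{prop:vbspecial} from the general estimate of Proposition~\ref{prop:vbgeneral} by showing that $M^\ast_{t_\ast}$ is of order $\sigma\sqrt{|\ln\eps|}$ with overwhelming probability, so that each of the three error terms on the right-hand side of \eqref{eqn:vbgeneral} tends to zero. The process $\vecZ$ is Gaussian, and $Z^i_t - Z^{i-1}_t - q_t$ measures precisely the deviation of the $i$-th gap from its moving equilibrium value $q_t = 1+\eps t/d$. Since $\vecZ$ solves the linear time-dependent system \eqref{eqn:original_phi}, the vector of centred gap-deviations is an (inhomogeneous) Ornstein--Uhlenbeck-type Gaussian process whose drift matrix has a uniformly positive spectral gap on $[0,t_\ast]$ (because $\phi(t) = U''(q_t)$ stays bounded away from $0$ by Assumption~$\U$, as $q_t \in [1,\br]$ for $t \le t_\ast$). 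Hence its variances are $O(\sigma^2)$ uniformly in $t\le t_\ast$, and a standard maximal inequality for Gaussian processes (e.g. Borell--TIS together with a chaining/entropy bound, or the explicit Ornstein--Uhlenbeck supremum asymptotics already used in \cite{ABL}) gives
\[
   M^\ast_{t_\ast} = O_{\P}\!\left( \sigma \sqrt{\ln(t_\ast)} \right) = O_{\P}\!\left( \sigma \sqrt{|\ln\eps|} \right),
\]
since $t_\ast = d(\br-1)/\eps$ so $\ln t_\ast \sim |\ln\eps|$.

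With this estimate in hand the three terms are handled as follows. For the last term, $\P(C M^\ast_{t_\ast} \ge 1)$: since $C M^\ast_{t_\ast} = O_\P(\sigma\sqrt{|\ln\eps|}) \to 0$ by \eqref{eqn:stretch_vb} (which forces $\sigma^2|\ln\eps| \le \sigma^2|\ln\eps|^3 \to 0$), this probability vanishes. The same bound makes $4C(M^\ast_{t_\ast})^2 = O_\P(\sigma^2|\ln\eps|) \to 0$ and $M^\ast_{t_\ast}\to 0$ in probability, so $\P(M^\ast_{t_\ast} + 4C(M^\ast_{t_\ast})^2 \ge r) \to 0$ because $r>1$ is a fixed constant. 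The first term is the one that actually couples to $\delta$: we need $\P(M^\ast_{t_\ast} \ge \sqrt{\delta}/C) \to 0$, i.e. $\sqrt{\delta}/C$ must dominate the typical size $\sigma\sqrt{|\ln\eps|}$ of $M^\ast_{t_\ast}$. Now $\delta = \theta\sigma/\sqrt{\ln(\sigma/\eps)}$, so
\[
   \frac{\sqrt{\delta}}{\sigma\sqrt{|\ln\eps|}}
   = \frac{\sqrt{\theta}}{C}\cdot \frac{\sigma^{-1/2}}{(\ln(\sigma/\eps))^{1/4}\,|\ln\eps|^{1/2}} ,
\]
and one checks using \eqref{eqn:stretch_vb} that this ratio tends to $+\infty$: indeed $\sigma^2|\ln\eps|^3\to 0$ gives $\sigma \ll |\ln\eps|^{-3/2}$, hence $\sigma^{-1/2} \gg |\ln\eps|^{3/4}$, which beats the $|\ln\eps|^{1/2}$ in the denominator (the $(\ln(\sigma/\eps))^{1/4}$ factor is at most a power of $|\ln\eps|$ times a constant and is absorbed, since $\sigma/\eps\to\infty$ but is at most $\eps^{-1}$ up to the constraint on $\sigma$). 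Therefore $\sqrt{\delta}/C$ grows strictly faster than $\sigma\sqrt{|\ln\eps|}$, and by the Gaussian tail bound for $M^\ast_{t_\ast}$ (which decays like $\exp(-c\, (x/\sigma)^2 + C\ln t_\ast)$ for deviations $x$) the probability $\P(M^\ast_{t_\ast}\ge \sqrt{\delta}/C)$ tends to $0$.

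The main obstacle is the rigorous quantitative control of $M^\ast_{t_\ast}$: one must show that the Gaussian supremum of the gap-deviation process over the long interval $[0,t_\ast]$, with $t_\ast\to\infty$, is really of order $\sigma\sqrt{|\ln t_\ast|}$ with Gaussian-type tails, uniformly in the small time-dependence coming from $\phi(t)=U''(q_t)$. This requires (i) a uniform lower bound on the spectral gap of the discrete Laplacian scaled by $\phi(t)$, which is immediate from Assumption~$\U$ since $\phi$ is bounded away from $0$ on $[0,t_\ast]$, and (ii) a maximal inequality; the cleanest route is to compare with the time-homogeneous Ornstein--Uhlenbeck process at the smallest value of $\phi$ (stochastic domination of the sup of an OU process is monotone in the restoring coefficient in a suitable sense, or one simply uses that a time-change reduces the inhomogeneous process to a homogeneous one up to bounded factors), for which the supremum asymptotics are classical and already invoked in \cite{ABL}. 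Once the bound on $M^\ast_{t_\ast}$ is established, the rest is the elementary arithmetic with the logarithmic factors sketched above, using only the two conditions in \eqref{eqn:stretch_vb}.
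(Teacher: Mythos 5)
Your proposal is correct and takes essentially the same route as the paper: reduce to Proposition~\ref{prop:vbgeneral}, bound $M^\ast_{t_\ast}$ by Gaussian supremum estimates over $[0,t_\ast]$ (variance $O(\sigma^2)$ from the uniform positivity of $\phi=U''(q_\cdot)$, per-unit-interval tails $\exp(-cR^2)$ and a union bound contributing $\ln t_\ast\sim|\ln\eps|$), and conclude from $\delta/\sigma^2\gg|\ln\eps|$ under \eqref{eqn:stretch_vb}. The only detail the paper adds is the bounded deterministic shift $\eps\Delta^i_t$ in the mean of the gaps (they are not exactly centred at $q_t$), which is harmless since $\eps\ll\sqrt{\delta}$.
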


The proof of this proposition is given in Section~\ref{sec:proof_prop:vbspecial}.

\begin{proof} [ of Theorem~\ref{thm:nonlinear}]

{\sl Lower bound.} Assume that for some $s$ we have
\begin{eqnarray*}
  &&   \max_{1\leq i\leq d} \left( X^{i}_{\uu,s}-X^{i-1}_{\uu,s}\right) \geq \br+ 4\delta,
\\
  &&  \max_{0\leq i\leq d} |X^i_{\uu,s}-Z^i_s|<\delta,
\\
  &&  \max_{0\leq i\leq d} |Z^i_s-\XX^i_{\uu,s}|<\delta.
\end{eqnarray*}
Then
\[
   \max_{1\leq i\leq d} \left( \XX^{i}_{\uu,s}-\XX^{i-1}_{\uu,s}\right) \geq \br.
\]
It follows that for every $\gamma_1>\gamma/\sqrt{\uu}$ and  every $t\in [t_\ast - \gamma_1 \, \frac{\sigma}{\eps} \sqrt{\ln\left(\frac{\sigma}{\eps}\right)}, t_\ast]$
\begin{eqnarray*}
    \P(\tau_{\XX,\br} \leq t) &\geq&  \P(\tau_{\uu,\br+4\delta} \leq t)
    -\P\left(\tau_{\uu,\br} \leq t_\ast - \gamma_1 \, \frac{\sigma}{\eps} \sqrt{\ln\left(\frac{\sigma}{\eps}\right)}
    \right)
\\
    && -  \sum_{i=1}^{d-1} \P\left(  \sup_{s\in [t_\ast - \gamma_1 \, \frac{\sigma}{\eps} \sqrt{\ln\left(\frac{\sigma}{\eps}\right)},t_\ast]}
   |X^i_{\uu,s}-Z_s^i|>\delta \right)
\\
   && -  \P(S_{t_\ast}^*\geq \delta)
\\
  &=&  \P(\tau_{\uu,\br+4\delta} \leq t) -o(1),
\end{eqnarray*}
where we used Theorem~\ref{thm:intermediateslowpulling_ub},
Lemma~\ref{lem:ZXclose} and Proposition~\ref{prop:vbspecial}.
The rest of the proof goes along the same lines as in Theorem~\ref{thm:intermediateslowpulling_Z}.

{\sl Upper bound.} Assume that for some $s$ we have
\begin{eqnarray*}
  &&   \max_{1\leq i\leq d} \left( \XX^{i}_{\uu,s}-\XX^{i-1}_{\uu,s}\right) \geq \br,
\\
  &&  \max_{0\leq i\leq d} |Z^i_s-\XX^i_{\uu,s}|<\delta,
\end{eqnarray*}
Then
\[
  \max_{1\leq i\leq d} \left( Z^{i}_{s}- Z^{i-1}_{s}\right) \geq \br-2\delta.
\]
If, additionally
\[
    \max_{0\leq i\leq d} |X^i_{\uu,s}-Z^i_s|<\delta,
\]
then
\[
   \max_{1\leq i\leq d} \left( X^{i}_{\uu,s}- X^{i-1}_{\uu,s}\right) \geq \br-4\delta.
\]
It follows that for every  $t\in [t_\ast- \gamma_1 \, \frac{\sigma}{\eps} \sqrt{\ln\left(\frac{\sigma}{\eps}\right)},t_\ast]$
\begin{eqnarray*}
    \P(\tau_{\XX,\br} \leq t) &=&
    \P\left(\tau_{\XX,\br} \leq t_\ast - \gamma_1 \, \frac{\sigma}{\eps} \sqrt{\ln\left(\frac{\sigma}{\eps}\right)}\right)
    + \P\left(\tau_{\XX,\br} \in [t_\ast - \gamma_1 \, \frac{\sigma}{\eps} \sqrt{\ln\left(\frac{\sigma}{\eps}\right)},t]\right)
\\
    &\leq&
     \P\left( \tau_{Z,\br-2\delta} \leq t_\ast - \gamma_1 \, \frac{\sigma}{\eps} \sqrt{\ln\left(\frac{\sigma}{\eps}\right)}\right)
    +  \P(\tau_{\uu,\br-4\delta} \leq t)
\\
    && +  \sum_{i=1}^{d-1} \P\left( \sup_{s\in [t_\ast - \gamma_1 \, \frac{\sigma}{\eps} \sqrt{\ln\left(\frac{\sigma}{\eps}\right)},t_\ast]}
   |X^i_{\uu,t}-Z_t^i|>\delta \right) +  \P(S_{t_\ast}^*\geq \delta)
\\
  &=&  \P(\tau_{\uu,\br-4\delta} \leq t) +o(1),
\end{eqnarray*}
where we used Lemma~\ref{lem:noearlyZ}, Lemma~\ref{lem:ZXclose} and Proposition \ref{prop:vbspecial}.
The rest of the proof goes along the same lines as in Theorem~\ref{thm:intermediateslowpulling_Z}.
\end{proof}

\section{Auxiliary technical results}
\label{sec:technical}

\subsection{Some properties of the process $\vecZ$}
\subsubsection*{Scalar analogues of $\vecZ$}

We work under the assumptions \eqref{eqn:stretch_Z} and $\U$.
Recall that under assumption $\U$  our potential $U$ is convex on $[1,\br]$ and there exist finite positive constants
$\kappa_{\min}, \kappa_{\max}$ and $K$ such that
\begin{eqnarray*}
   \kappa _{\min}  \leq  U''(x) &\le&  \kappa_{\max}, \qquad x\in [1,\br],
\\
  |U'''(x)| &\le& K, \ \ \qquad x\in [1,\br].
\end{eqnarray*}
\smallskip

\begin{lem} \label{lem:winternoteslemma}
Let $U : [1,\br] \to \R$ be a function satisfying Assumption $\U$.
Set $\phi(t):=U''(1+\frac{\eps t}{d})$, $t\in[0,t_\ast]$
and $\uu:=\phi(t_\ast)=U''(\br)$, where $t_\ast:=d(\br-1)/\eps$.

Consider two scalar stochastic differential equations
\[
     \dd \newY_t = -\uu\, \newY_t\dd t+\sigma \dd B_t,\qquad t\geq 0,\qquad \newY_0=0,
\]
and
\begin{equation} \label{eqn:newZdefn}
     \dd \ZZZ_t = -\phi(t)\ZZZ_t\dd t+\sigma \dd B_t,\qquad t\geq 0,\qquad \ZZZ_0=0.
\end{equation}
Assume that $\eps,\sigma\to 0$ with \eqref{eqn:stretch_Z} being true.
Let
\[
   \TT=\TT(\sigma,\eps,\gamma_1):= [t_\ast- (\gamma_1 \sigma/\eps) \sqrt{\ln (\sigma/\eps)},t_\ast].
\]
Then, for any $\gamma_1>0$ and $\theta>0$, we have
\[
     \P\left( \sup_{t\in \TT} |\newY_t-\ZZZ_t|
     > \frac{\theta\sigma}{\sqrt{\ln (\sigma/\eps)}} \right) \to 0.
\]
\end{lem}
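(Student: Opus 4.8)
The plan is to study the difference $D_t := \newY_t - \ZZZ_t$ directly. Both processes are driven by the same Brownian motion, so the $\sigma\,\dd B_t$ terms cancel, and $D_t$ satisfies the random linear ODE
\[
   \dd D_t = -\uu\, D_t \,\dd t + (\phi(t)-\uu)\,\ZZZ_t\,\dd t, \qquad D_0 = 0.
\]
Solving this by the integrating factor $\ex^{\uu t}$ gives the deterministic-coefficient representation
\[
   D_t = \int_0^t \ex^{-\uu(t-s)}\,(\phi(s)-\uu)\,\ZZZ_s\,\dd s,
\]
so that on the interval $\TT$ we get the bound
\[
   \sup_{t\in\TT} |D_t| \;\le\; \Big(\sup_{s\in[0,t_\ast]}|\phi(s)-\uu|\Big)\,\int_0^{t_\ast} \ex^{-\uu(t_\ast-s)}\,|\ZZZ_s|\,\dd s
   \;\le\; \frac{C_1}{\uu}\,\Big(\sup_{s\in[0,t_\ast]}|\phi(s)-\uu|\Big)\,\sup_{s\in[0,t_\ast]}|\ZZZ_s|,
\]
using $\uu\ge\kappa_{\min}>0$. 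Here $C_1$ is an absolute constant coming from $\int_0^{t_\ast}\ex^{-\uu(t_\ast-s)}\dd s \le 1/\uu$; one can absorb the $1/\uu$ factor into a constant depending only on $\kappa_{\min}$.

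Next I would control the two factors on the right. For the coefficient gap, since $\phi(s)=U''(1+\tfrac{\eps s}{d})$ and $\uu = U''(\br) = U''(1+\tfrac{\eps t_\ast}{d})$, the bound $|U'''|\le K$ on $[1,\br]$ gives
\[
   \sup_{s\in[0,t_\ast]}|\phi(s)-\uu| \;=\; \sup_{s\in[0,t_\ast]}\big|U''(1+\tfrac{\eps s}{d})-U''(\br)\big|
   \;\le\; K\,\frac{\eps(t_\ast-s)}{d}\Big|_{s=0} \;=\; K(\br-1),
\]
which is a fixed constant — not small. (If one only needs a crude bound this suffices; alternatively, on the relevant sub-interval $\TT$ the gap is $O(\eps \cdot \tfrac{\sigma}{\eps}\sqrt{\ln(\sigma/\eps)}) = O(\sigma\sqrt{\ln(\sigma/\eps)})\to 0$, but since we integrate over all of $[0,t_\ast]$ the global constant bound is what we use.) For the supremum of $\ZZZ$, I would use that $\ZZZ$ is a Gaussian (time-inhomogeneous Ornstein--Uhlenbeck) process with $\ZZZ_0=0$ and mean-reversion rate $\phi(s)\in[\kappa_{\min},\kappa_{\max}]$ bounded away from $0$ and $\infty$; a standard estimate (comparison with a stationary OU process, or a direct chaining/reflection argument) gives, for a constant $C_2=C_2(\kappa_{\min},\kappa_{\max})$,
\[
   \P\Big(\sup_{s\in[0,t_\ast]}|\ZZZ_s| > x\Big) \;\le\; C_2\,\frac{t_\ast}{1}\,\ex^{-c x^2/\sigma^2}
   \quad\text{roughly, i.e.}\quad \sup_{s\in[0,t_\ast]}|\ZZZ_s| = O_{\P}\big(\sigma\sqrt{\ln t_\ast}\big) = O_{\P}\big(\sigma\sqrt{|\ln\eps|}\big),
\]
since $t_\ast = d(\br-1)/\eps$ is polynomial in $1/\eps$.

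Combining, $\sup_{t\in\TT}|D_t| = O_\P\big(\sigma\sqrt{|\ln\eps|}\big)$, while the target threshold is $\theta\sigma/\sqrt{\ln(\sigma/\eps)}$. Now comes the point where the scaling assumption enters: we need
\[
   \P\Big(\sup_{t\in\TT}|D_t| > \frac{\theta\sigma}{\sqrt{\ln(\sigma/\eps)}}\Big)
   \;\le\; \P\Big(\sup_{s\in[0,t_\ast]}|\ZZZ_s| > \frac{c\,\theta\,\sigma}{\sqrt{\ln(\sigma/\eps)}}\Big) + o(1),
\]
and plugging $x = c\theta\sigma/\sqrt{\ln(\sigma/\eps)}$ into the Gaussian tail bound gives exponent $-c' x^2/\sigma^2 = -c'\theta^2/\ln(\sigma/\eps)$, so $\ex^{-c'x^2/\sigma^2}\to 1$ — which is useless on its own. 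The correct way is instead to note that the bound $\sup_{t\in\TT}|D_t|\le C_3\,\sup_{s}|\ZZZ_s|$ is \emph{much} stronger than needed only if $\sup_s|\ZZZ_s|$ were bounded by $\sigma$ times something tending to zero; but it is of order $\sigma\sqrt{|\ln\eps|}$, which is \emph{larger} than $\sigma/\sqrt{\ln(\sigma/\eps)}$. \textbf{The main obstacle}, therefore, is that the crude estimate $|D_t|\lesssim \sup_s|\ZZZ_s|$ loses too much: the coefficient gap $\phi(s)-\uu$ is $O(\eps(t_\ast-s))$ and is genuinely small for $s$ near $t_\ast$, and this decay must be kept inside the integral. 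The fix is to split $\int_0^{t_\ast} = \int_0^{t_\ast - T} + \int_{t_\ast-T}^{t_\ast}$ for a well-chosen intermediate scale $T$ (e.g.\ $T \asymp \frac{\sigma}{\eps}\sqrt{|\ln\eps|}$): on the first piece the factor $\ex^{-\uu(t_\ast-s)}$ is exponentially small (of order $\ex^{-\kappa_{\min}T}$, which beats any polynomial in $1/\eps$), killing that contribution; on the second piece one uses $|\phi(s)-\uu|\le K\eps(t_\ast-s)/d \le K\eps T/d = O(\sigma\sqrt{|\ln\eps|})$, so that piece is bounded by $O(\sigma\sqrt{|\ln\eps|})\cdot\frac1\uu\cdot\sup_s|\ZZZ_s| = O_\P(\sigma^2 |\ln\eps|)$, and the assumption $\sigma^2|\ln\eps|^{3/2}\to 0$ from \eqref{eqn:stretch_Z} makes this $o\big(\sigma/\sqrt{\ln(\sigma/\eps)}\big)$ with room to spare (note $\sqrt{\ln(\sigma/\eps)}\le\sqrt{|\ln\eps|}$, so $\sigma^2|\ln\eps| \cdot \sqrt{\ln(\sigma/\eps)}/\sigma = \sigma|\ln\eps|^{3/2}\to 0$ after dividing by $\sigma/\sqrt{\ln(\sigma/\eps)}$ — precisely the exponent $3/2$ appearing in \eqref{eqn:stretch_Z}). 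Once the split is set up, the only probabilistic input is the maximal inequality for the Gaussian process $\ZZZ$ on $[0,t_\ast]$, which I would either cite from the companion analysis or prove by covering $[0,t_\ast]$ with $O(t_\ast)$ unit intervals and using the Gaussian maximal inequality on each, together with $\var(\ZZZ_s)\le \sigma^2/(2\kappa_{\min})$ uniformly. Assembling the two pieces and letting $\eps,\sigma\to0$ yields the claim for every fixed $\theta>0$ and $\gamma_1>0$.
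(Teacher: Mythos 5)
Your route is genuinely different from the paper's: you solve for the difference $D_t=\newY_t-\ZZZ_t$ by variation of constants, bound it pathwise by $\int_0^t e^{-\uu(t-s)}|\phi(s)-\uu|\,|\ZZZ_s|\,\dd s$, split the integral, and finish with a maximal inequality for $\ZZZ$. The paper never argues pathwise; it computes the variances and the covariance of the Gaussian pair exactly, obtaining $\E[(\newY_t-\ZZZ_t)^2]=\sigma^2\bigl(O(\sigma^2\psi)+O(\eps|\ln\eps|)\bigr)$ with $\psi=\ln(\sigma/\eps)$ for $t\in\TT$, and then controls the supremum over $\TT$ via Borell's concentration inequality together with a Dudley entropy bound on unit subintervals and a union bound. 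Your Duhamel representation and the key observation that $|\phi(s)-\uu|\le K\eps(t_\ast-s)/d$ must be kept inside the integral are correct and could in principle replace the covariance computation.

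However, the final bookkeeping contains a genuine gap. Your near-piece bound is $O_\P(\sigma^2|\ln\eps|)$, while the threshold is $\theta\sigma/\sqrt{\ln(\sigma/\eps)}$, so you need $\sigma|\ln\eps|\sqrt{\ln(\sigma/\eps)}\to0$, which you reduce to $\sigma|\ln\eps|^{3/2}\to0$ and claim follows from \eqref{eqn:stretch_Z}. It does not: \eqref{eqn:stretch_Z} says $\sigma^2|\ln\eps|^{3/2}\to0$, and for $\sigma=|\ln\eps|^{-1}$ (with $\sigma/\eps\to\infty$) one has $\sigma^2|\ln\eps|^{3/2}=|\ln\eps|^{-1/2}\to0$ while $\sigma|\ln\eps|^{3/2}=|\ln\eps|^{1/2}\to\infty$. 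The condition you actually use is equivalent to \eqref{eqn:stretch_vb}, so as written your argument proves the lemma only in the smaller regime of Theorem~\ref{thm:nonlinear}, not in the regime \eqref{eqn:stretch_Z} in which the lemma is stated and used for Theorem~\ref{thm:intermediateslowpulling_Z}. The loss comes from two crude steps: (i) you bound $\sup_{[0,t_\ast]}|\ZZZ_s|$ by $O_\P(\sigma\sqrt{|\ln\eps|})$, although the exponential weight confines the relevant $s$ to a window of length $\asymp\frac{\sigma}{\eps}\sqrt{\ln(\sigma/\eps)}$ near $t_\ast$ (plus negligible tails), where the supremum is only $O_\P(\sigma\sqrt{\ln(\sigma/\eps)})$; (ii) you replace $t_\ast-s$ by the full window length $T\asymp\frac{\sigma}{\eps}\sqrt{|\ln\eps|}$ instead of keeping $t_\ast-s\le\gamma_1\frac{\sigma}{\eps}\sqrt{\ln(\sigma/\eps)}+(t-s)$, the $(t-s)$ part being harmless against $e^{-\uu(t-s)}$. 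With both sharpenings your route gives $\sup_{t\in\TT}|D_t|=O_\P\bigl(\sigma^2\ln(\sigma/\eps)\bigr)$ and thus requires $\sigma^2\ln(\sigma/\eps)^3\to0$, which is precisely the smallness the paper itself invokes at the corresponding point of its proof; repaired in this way your argument would be on par with the paper's. Two smaller points: $e^{-\kappa_{\min}T}$ with $T\asymp\frac{\sigma}{\eps}\sqrt{|\ln\eps|}$ does not beat every polynomial in $1/\eps$, since only $\sigma/\eps\to\infty$ is assumed — fortunately it only needs to beat $\sqrt{|\ln\eps|\,\ln(\sigma/\eps)}$, which it does; and since you split at $t_\ast-T$ but evaluate at $t\in\TT$, the constant in $T$ must exceed $\gamma_1$ for the exponential gain to survive.
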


\begin{proof} We will use the abbreviations $h:=\eps/\sigma\to 0$ and $\psi:=\ln (h^{-1}) \to \infty$.

{\it Step 1:} We derive formulas for the variances of $\newY$, $\ZZZ$ and their covariances.
Note that the explicit solution of \eqref{eqn:newZdefn} is given by
\begin{equation} \label{eqn:defnnewz}
       \ZZZ_t =  \sigma\, \exp(-\Phi(t)) \int_0^t \exp( \Phi(s) )\, \dd B_s,
\end{equation}
where $\Phi(t):=\int_0^t \phi(u) \dd u$.
Therefore, for the covariance we have
$$
     \cov(\ZZZ_{t_1},\ZZZ_{t_2})
     = \sigma^2 \int_0^{\min(t_1,t_2)} \exp( 2\Phi(s)-\Phi(t_1)-\Phi(t_2) ) \dd s.
$$
As a special case, for the variance we obtain
\begin{equation} \label{eqn:newZvariance}
    \var \ZZZ_t  = \sigma^2 \int_0^t \exp( 2(\Phi(s)-\Phi(t)) ) \dd s.
\end{equation}

Replacing $\phi(t)$ by the constant $\uu$ in these formulas yields
\begin{eqnarray}
\label{eqn:addarg210+}
     \newY_t &=& \sigma \, \int_0^t \exp( \uu(s-t) ) \, \dd B_s\,;
\\   \nonumber
     \cov(\newY_{t_1},\newY_{t_2})
     &=& \sigma^2 \int_0^{\min(t_1,t_2)} e^{ \uu(2s-t_1-t_2) } \dd s
\\ \label{eqn:newYcovariance}
     &=& \frac{\sigma^2}{2\uu} \left( e^{- \uu|t_1-t_2|}-e^{-\uu(t_1+t_2)}\right);
\\  \label{eqn:newYvariance}
   \var\newY_t  &=& \frac{\sigma^2}{2\uu}\left( 1 - e^{-2\uu t}\right).
\end{eqnarray}

For the covariance between $\newY$ and $\ZZZ$, one obtains from \eqref{eqn:defnnewz}
and \eqref{eqn:addarg210+} that
\begin{equation} \label{eqn:newZYcovarinace}
      \cov(\newY_t,\ZZZ_t) = \sigma^2 \int_0^t  e^{ \uu(s-t) + \Phi(s)-\Phi(t)} \dd s.
\end{equation}

We shall show that for $t\in\TT$
\begin{eqnarray}
     \var\newY_t &=& \frac{\sigma^2}{2\uu}( 1+ O(\eps |\ln \eps|)), \label{eqn:feb20-4.1}
\\
     \var\ZZZ_t &=& \frac{\sigma^2}{2\phi(t)}( 1+ O(\eps |\ln \eps|)),\label{eqn:feb20-4.2}
\\
     \cov(\newY_t,\ZZZ_t) &=& \frac{\sigma^2}{\phi(t)+\uu}( 1+ O(\eps |\ln \eps|)),\label{eqn:feb20-4.3}
\\
     \E[ (\newY_t - \ZZZ_t)^2 ] &=& \sigma^2( O(\sigma^2 \psi) + O( \eps |\ln \eps| ) ),\label{eqn:feb20-4.4}
\end{eqnarray}
and
\be  \label{eqn:feb20-hoelderproperty}
     \E[ (\ZZZ_{t_2}-\ZZZ_{t_1})^2]
     \leq  C \sigma^2 |t_2-t_1|,\qquad t_1,t_2\leq t_\ast, |t_2-t_1|\leq 1.
\ee

{\it Step 2:} Variance analysis: proof of  \eqref{eqn:feb20-4.1} and \eqref{eqn:feb20-4.2}.

Under assumption \eqref{eqn:stretch_Z} we have $\TT\subseteq [t_\ast/2,t_\ast]$.

Relation \eqref{eqn:feb20-4.1} follows now directly from \eqref{eqn:newYvariance},
 as
\[
    e^{-2\uu t}\leq e^{-2\uu t_\ast/2}
    = e^{-\uu d (\br-1) \eps^{-1}} \ll \eps |\ln \eps| \qquad \textrm{for } t\in\TT.
\]

Now we move to the proof of \eqref{eqn:feb20-4.2}.
Recall that under Assumption $\U$  the coefficient function $\phi$ is bounded away from zero on $[0,t_\ast]$,
namely $\phi(t)\geq \kappa_{\min}>0$, $t\in[0,t_\ast]$. This implies that for $0\leq s\leq t$
\[
    \Phi(t)-\Phi(s) = \int_s^t \phi(u) \dd u \geq \kappa_{\min} (t-s).
\]
Therefore, for all $0\leq w\leq t$, we have
\begin{eqnarray*}
      \int_0^{t-w} e^{2(\Phi(s)-\Phi(t))} \dd s &\leq& \int_0^{t-w} e^{2 \kappa_{\min}(s-t)} \dd s
\\
     &\leq& \int_{-\infty}^{t-w} e^{2 \kappa_{\min}(s-t)} \dd s = (2\kappa_{\min})^{-1}e^{-2\kappa_{\min} w}.
\end{eqnarray*}
We use this for $w:=\kappa_{\min}^{-1} |\ln \eps|$ to get that
\begin{equation} \label{eqn:intermediatestepvarinacenewZ}
     \int_0^{t-w} e^{2(\Phi(s)-\Phi(t))} \dd s
     \leq (2\kappa_{\min})^{-1} \eps^2 \ll \eps |\ln \eps|.
\end{equation}
In the essential zone $s\in[t-w,t]$ we use the Taylor expansion of $\Phi$:
\begin{eqnarray}
    \Phi(s)-\Phi(t)&=&\Phi'(t)(s-t) + \frac{1}{2}\Phi''(\tilde s)(s-t)^2 \notag
\\
    &=&\phi(t)(s-t)+\frac{1}{2}\phi'(\tilde s)(s-t)^2 \notag
\\
    &=& \phi(t)(s-t) + \frac{\eps}{2d}\, U'''(\tilde x) (s-t)^2, \label{eqn:aug27-1}
\end{eqnarray}
with  some  $\tilde s\in[s,t]$ and  $\tilde x:= 1+\tfrac{\eps \tilde s}{d} \in[1,\br]$.

Recall that under Assumption $\U$ we have  $|U'''(\tilde x)|\leq K$.
Further, using that $|s-t|\leq w = \kappa_{\min}^{-1} |\ln\eps|$, we obtain
\[
    \Phi(s)-\Phi(t) \leq \phi(t)(s-t) + \frac{K}{2d \, \kappa_{\min}}\, \eps |\ln\eps| (t-s).
\]
It follows that
\begin{eqnarray*}
    \int_{t-w}^t \exp(2(\Phi(s)-\Phi(t))) \dd s
    &\leq& \int_{t-w}^t \exp\left( 2\left(\phi(t)-\frac{K}{2d\, \kappa_{\min}}\, \eps |\ln\eps| \right)(s-t)\right) \dd s
\\
     &\leq& \int_0^\infty \exp\left(- 2\left(\phi(t)-\frac{K}{2d \kappa_{\min}} \, \eps |\ln\eps| \right) u\right) \dd u
\\
     &=&\frac{1}{2(\phi(t)-\frac{K}{2d \kappa_{\min}} \, \eps |\ln\eps| )}
\\
     &\leq & \frac{1}{2\phi(t)} (1+O(\eps |\ln\eps|)),
\end{eqnarray*}
having used again that $U''$ (and so $\phi$) is bounded away from zero.

Putting together the last relation with \eqref{eqn:intermediatestepvarinacenewZ}
and \eqref{eqn:newZvariance} shows the upper bound in \eqref{eqn:feb20-4.2}.

For the lower bound in \eqref{eqn:feb20-4.2}, we argue similarly.
Again we set $w:=\kappa_{\min}^{-1} |\ln \eps|$. Using the Taylor expansion of $\Phi$ from \eqref{eqn:aug27-1},
we obtain for $s\in[t-w,t]$
\[
   \Phi(s)-\Phi(t) \geq \phi(t) (s-t) - \frac{\eps}{d} \, K (s-t)^2
   \geq  \left[\phi(t) + \frac{\eps}{2d} \, K w \right] (s-t).
\]
Using this estimate, we get that
\begin{eqnarray*}
    \int_0^t e^{2(\Phi(s)-\Phi(t))}\dd s &\geq& \int_{t-w}^t e^{-2(t-s) [\phi(t)+\eps w K/(2d)]} \dd s
\\
    &=& \int_0^w e^{-2 u [\phi(t)+\eps w K/(2d)]} \dd u
\\
    &=& \frac{1}{2[\phi(t)+\eps w K/(2d)]} \cdot (1-e^{-2 w [\phi(t)+\eps w K /(2d)]}).
\end{eqnarray*}
The last term is lower bounded by
\begin{eqnarray*}
   \frac{1}{2\phi(t)} \left( 1 - \frac{\eps w K}{2d\phi(t)} \right) \cdot (1-e^{-2 w \kappa_{\min}})
   &\ge& \frac{1}{2\phi(t)} \left( 1 - \frac{\eps |\ln\eps| K}{2d\kappa_{\min}^2} \right) \cdot (1-\eps^2)
\\
    &=& \frac{1}{2\phi(t)} \left( 1 - O(\eps|\ln\eps|)\right).
\end{eqnarray*}
This shows (recall \eqref{eqn:newZvariance}) that
$\var \ZZZ_t \geq  \frac{1}{2\phi(t)} \left( 1 - O(\eps|\ln\eps|)\right)$, as required.

{\it Step 3:} Covariance analysis: proof of \eqref{eqn:feb20-4.3} and \eqref{eqn:feb20-4.4}.

Relation \eqref{eqn:feb20-4.3} follows in the same way as we proved \eqref{eqn:feb20-4.2},
because the integral in \eqref{eqn:newZYcovarinace} has the same structure as the integral
in \eqref{eqn:newZvariance}, the only difference being that
$\Phi(t)$ is replaced by $\Phi(t)+ \uu t$ and all properties used in the above proof carry over.

We now show \eqref{eqn:feb20-4.4}. Using the Taylor expansion of $\phi$, we get that for $t\in\TT$,
\[
   \phi(t) = U''(1+\frac{\eps t}{d}) = \uu+U'''(\tilde x) \, \frac{\eps}{d} \, (t-t_\ast),
\]
with some $\tilde x\in[1,\br]$. Therefore, for $t\in\TT$ we have
\[
    |\phi(t)-\uu| \leq\frac{|U'''(\tilde x)|}{d} \cdot \eps|t-t_\ast|
    \leq C \eps \gamma_1 h^{-1} \sqrt{\psi}
    \sim C \gamma_1  \sigma \sqrt{|\ln \eps|} \to 0,
\]
by the assumption \eqref{eqn:stretch_Z} and Assumption $\U$.
Using this in the following computation, we get:
\begin{eqnarray*}
    &&\E[ (\newY_t-\ZZZ_t)^2]
\\
    &=& \var\newY_t +\var\ZZZ_t - 2 \cov(\newY_t,\ZZZ_t)
\\
    &=& \sigma^2 \left( \frac{1}{2\uu} +\frac{1}{2\phi(t)} - \frac{2}{\uu+\phi(t)} + O(\eps |\ln\eps|)\right)
\\
    &=& \sigma^2 \left( \frac{\phi(t)\,\uu +\phi(t)^2 + \uu^2 + \phi(t)\,\uu - 4 \phi(t)\,\uu}
         {2\uu\phi(t)(\uu+\phi(t))} + O(\eps |\ln\eps|)\right)
\\
     &=& \sigma^2 \left( \frac{(\phi(t)-\uu)^2}{2\uu\phi(t)(\uu+\phi(t))} + O(\eps |\ln\eps|)\right)
\\
    &=& \sigma^2 \left( O((\phi(t)-\uu)^2) + O(\eps |\ln\eps|)\right)
\\
    &=& \sigma^2 \left( O(\sigma^2 \psi) + O(\eps |\ln\eps|)\right),
\end{eqnarray*}
as stated.

{\it Step 4:} Proof of \eqref{eqn:feb20-hoelderproperty}.

Let $0\leq t_1\leq t_2\leq t_\ast$. Then, using \eqref{eqn:defnnewz}, we have
\begin{eqnarray*}
    &&    \E[ (\ZZZ_{t_2}-\ZZZ_{t_1})^2 ]
\\
    &=& \sigma^2 (e^{-\Phi(t_2)} - e^{-\Phi(t_1)})^2 \int_0^{t_1} e^{2\Phi(s)} \dd s
        + \sigma^2 e^{-2\Phi(t_2)} \int_{t_1}^{t_2} e^{2\Phi(s)} \dd s
\\
     &=& \sigma^2  ( e^{\Phi(t_1)-\Phi(t_2)} - 1)^2 \int_0^{t_1} e^{2(\Phi(s)-\Phi(t_1))} \dd s
        + \sigma^2 \int_{t_1}^{t_2} e^{2(\Phi(s)-\Phi(t_2))} \dd s
\\
     &\leq& \sigma^2 (\Phi(t_2)-\Phi(t_1))^2 \int_0^{t_1} e^{2(\Phi(s)-\Phi(t_1))} \sigma^2\dd s
       + \sigma^2 \int_{t_1}^{t_2} e^{0} \dd s
\\
      &\leq& \sigma^2  \Big(\int_{t_1}^{t_2} \phi(u)\dd u\Big)^2 \int_0^{t_1} e^{2 \kappa_{\min}(s-t_1)} \dd s
      + \sigma^2 (t_2-t_1)
\\
      &\leq& \sigma^2 \kappa_{\max}^2 (t_2-t_1)^2 \, \frac{1}{2\kappa_{\min}}
      + \sigma^2 (t_2-t_1)
       \leq C \sigma^2 (t_2-t_1),
\end{eqnarray*}
if $t_2-t_1\leq 1$, as required in  \eqref{eqn:feb20-hoelderproperty}.

{\it Step 5: Probability evaluation.}
Define $\DD_t:=\sigma^{-1} ( \newY_t - \ZZZ_t)$. The claim of the
lemma is to show that
\[
    \P \Big( \sup_{t\in\TT} |\DD_t| > \frac{\theta}{\sqrt{\psi}} \Big) \to 0.
\]
Using the union bound and the fact that the length of $\TT$ is $\gamma_1 h^{-1} \sqrt{\psi}$,
it is sufficient to show that
\begin{equation} \label{eqn:02-10-tbsgaussian}
     h^{-1} \sqrt{\psi} \sup_{I\subseteq \TT, |I|=1}
     \P\Big( \sup_{t\in I} |\DD_t| > \frac{\theta}{\sqrt{\psi}} \Big) \to 0.
\end{equation}
In order to estimate the last probability, we will use
the standard techniques from the theory of Gaussian processes, see
\cite{L} and \cite{MLlecturenotes}.
Essentially, we have to estimate the maximal variance of $(\DD_t)$ as well as
the compactness properties of $I$ w.r.t.\ the distance induced by $(\DD_t)$.

Fix $I\subseteq \TT$ with $|I|=1$ and recall from \eqref{eqn:feb20-4.4} that
\begin{equation} \label{eqn:02-10-boundonmaxv}
      V_I:=\max_{t\in I} \var\DD_t \leq C ( \sigma^2 \psi + \eps |\ln\eps|).
\end{equation}

Let $\bar E$ and $\bar m$ denote the expectation and the median of the random variable
$\sup_{t\in I} \DD_t$, respectively. It is known from the general Gaussian theory that
$\bar m\leq \bar E$, see e.g.\ \cite[Lemma~12.2]{L}.

Assume that we have shown that
\begin{equation} \label{eqn:02-10-finaleqn}
      \bar E \leq \frac{\theta}{2\sqrt{\psi}}.
\end{equation}

Then by concentration principle, see \cite[Theorem~12.2]{L},  we have
\begin{eqnarray*}
      \P( \sup_{t\in I} |\DD_t| > \frac{\theta}{\sqrt{\psi}} )
      &\leq & 2 \, \P( \sup_{t\in I} \DD_t > \frac{\theta}{\sqrt{\psi}} )
\\
      &\leq & 2 \exp\left( -\frac{(\frac{\theta}{\sqrt{\psi}} - \bar m)^2}{2\, V_I} \right)
\\
      &\leq & 2 \exp\left( -\frac{(\frac{\theta}{\sqrt{\psi}} - \bar E)^2}{2\, V_I} \right)
\\
      &\leq & 2 \exp\left( -\frac{\theta^2}{8\,\psi \, V_I} \right)
\\
      &\leq & 2 \exp\left( -\frac{\theta^2}{8\, \psi \, C \, (\sigma^2 \psi + \eps |\ln\eps|)} \right)
\\
      &\leq & 2 \exp\left( -\frac{\theta^2}{16\,\psi\, C\, \max\{ \sigma^2 \psi , \eps |\ln\eps| \} } \right),
\end{eqnarray*}
which yields \eqref{eqn:02-10-tbsgaussian} because on the one hand
(using \eqref{eqn:stretch_Z})
\[
     h^{-1} \sqrt{\psi} \, e^{-c \psi^{-1} \eps^{-1} |\ln \eps|^{-1}}
     \ll \eps^{-1} \sqrt{|\ln \eps|} e^{- c \eps^{-1}|\ln\eps|^{-2} } \to 0,
\]
while on the other hand -- using that we can choose $\sigma^2 \psi^3 < \delta$
due to \eqref{eqn:stretch_Z} -- we obtain
\begin{eqnarray*}
    h^{-1} \sqrt{\psi}\,  e^{-c \sigma^{-2} \psi^{-2}}
    &=& h^{-1} \sqrt{\psi}\,  e^{-c \psi \sigma^{-2} \psi^{-3} }
\\
    &=& h^{-1}  e^{-c \psi\sigma^{-2} \psi^{-3}/2 } \,
    \cdot \sqrt{\psi} \, e^{-c \psi\sigma^{-2} \psi^{-3} /2 }
\\
    &\leq& h^{-1}  e^{-c \psi /(2\delta) } \cdot \sqrt{\psi} \, e^{-c \psi/ (2\delta) }
\\
    &=& h^{-1}  h^{c/(2\delta)} \cdot \sqrt{\psi} e^{-c \psi\, /(2\delta) }
    \to 0,
\end{eqnarray*}
for $\delta$ chosen small enough.

{\it Step 6:} We finally show \eqref{eqn:02-10-finaleqn}.

We shall use the Dudley bound, see \cite[Theorem~14.1]{L},
\begin{equation} \label{eqn:dudley02-10}
     \bar E \leq 4 \sqrt{2} \int_0^{\sqrt{V_I}} (\ln N(\rho))^{1/2} \dd \rho,
\end{equation}
where $N(\rho)$ is the minimal number of $\rho$-balls that is needed to cover $I$
in the process-induced distance
\[
   \Delta(s,t):= \E[ |\DD_t - \DD_s|^2 ]^{1/2}
\]

From the result \eqref{eqn:feb20-hoelderproperty} (and the corresponding result for $\newY$,
which is simple to show) one obtains that
\[
   \E[ |\DD_t - \DD_s|^2 ]^{1/2} \leq C |t-s|^{1/2},
   \qquad  t,s\in\TT,
\]
showing $N(\rho) \leq C' \rho^{-2}$ for $\rho>0$. This implies that the Dudley integral
in \eqref{eqn:dudley02-10} is upper bounded by a constant times $\sqrt{V_I |\ln V_I|}$.
The claim in \eqref{eqn:02-10-finaleqn} follows, if this quantity is of lower order
compared to $\theta/\sqrt{\psi}$, i.e.\ we need to show that $V_I |\ln V_I| \ll \psi^{-1}$.
Taking relation \eqref{eqn:02-10-boundonmaxv} into account, this is obtained from the following
two relations:
\begin{eqnarray}
\label{eqn:psisigma}
    \psi^{-1} \gg \sigma^2 \psi \, |\ln (\sigma^2 \psi)|,
\\
\label{eqn:psieps}
 \psi^{-1} \gg \eps |\ln \eps| |\ln( \eps |\ln \eps| ) |.
\end{eqnarray}

Finally, using \eqref{eqn:stretch_Z}, we obtain
\[
   \sigma^2 \psi\, |\ln (\sigma^2 \psi)| \ll  (\sigma^2\psi)^{2/3} = (\sigma^2\psi^3)^{2/3} \psi^{-4/3}
   \ll \psi^{-4/3}  \ll \psi^{-1},
\]
thus proving \eqref{eqn:psisigma}. Furthermore, from
\[
   \psi = \ln(\sigma/\eps) \leq |\ln \eps|
\]
inequality \eqref{eqn:psieps} follows trivially.
\end{proof}

\subsubsection*{A representation of $\vecZ$}
Let us return to the SDE system
\eqref{eqn:original_phi}
with a scalar function $\phi(\cdot)$, where $\phi(t)=U''(1+\frac{\eps t}{d})$.
We will now connect \eqref{eqn:original_phi} to the scalar processes treated in
Lemma~\ref{lem:winternoteslemma}.

The system \eqref{eqn:original_phi} can be written in the vector form
$$
(Z^1_t,\ldots,Z^{d-1}_t)^\top = \phi(t) \AA (Z^1_t,\ldots,Z^{d-1}_t)^\top \dd t + \sigma \dd \boldsymbol{B}_t,
$$
where the $(d-1)\times(d-1)$-matrix $\AA$ is defined by $\AA_{i,i}=-2$, $\AA_{i,i+1}=\AA_{i+1,i}=-1$, and $\AA_{i,j}=0$ otherwise.\footnote{Recall that $\vecZ$ has $d+1$ components, namely $(Z^0,\ldots,Z^d)$ with the trivial parts $Z^0\equiv 0$ and $Z^d_t=d+\eps t$, while in the last equation we only want to represent the non-trivial components of $\vecZ$.}
Consider a diagonalization of $\AA$ in the form
$\AA=Q^\top D Q$, where $D=\diag(\lambda_1,\ldots,\lambda_{d-1})$ and
$Q$ being a unitary operator. We only need that all eigenvalues $\lambda_j$, $1\leq j \leq d-1$, are negative.

Further, consider the scalar SDEs:
\be \label{eqn:newZdefn2}
     \dd \ZZZ_t^j
     = \lambda_j\phi(t)\ZZZ_t^j\dd t+\sigma \dd B^j_t,\qquad t\geq 0,\qquad \ZZZ_0^j=0,\qquad j=1,\ldots, d,
\ee
with independent Brownian motions $B^j$ and the same scalar function $\phi$ as above.
Note that up to the prefactors $\lambda_j$, these are the processes treated in
Lemma~\ref{lem:winternoteslemma}.
The system of these equations can be rewritten in the vector form with $\vecZZZ_t=(\ZZZ^1_t,\ldots,\ZZZ^{d-1}_t)$
\begin{equation} \label{eqn:newZdefnstar}
     \dd \vecZZZ_t = D\vecZZZ_t \phi(t)\dd t+\sigma \dd \boldsymbol{B}_t,\qquad \vecZZZ_0=0.
\end{equation}

Set
\[
    \generalizedq_t:=(\generalizedq_t^1,\ldots,\generalizedq_t^{d-1})
    :=-\int_0^t \exp( \AA (\Phi_t-\Phi_s) ) \dd s \cdot \boldsymbol{\nu},
\]
and further $\generalizedq_t^0\equiv \generalizedq_t^d\equiv 0$,
where $\boldsymbol{\nu}^j:=j/d$, $j=1,\ldots, d-1$, and $\Phi_t:=\int_0^t \phi(s) \dd s$. It is simple to check that
\[
    \AA \generalizedq_t \phi(t) = \generalizedq_t' + \boldsymbol{\nu}.
\]

This yields the following representation of $(Z_t^1,\ldots,Z_t^{d-1})$
in terms of processes $(\ZZZ^1_t,\ldots,\ZZZ^{d-1}_t)$.

\begin{lem} \label{lem:represenation}   Assume that the $\vecZZZ$ solves  \eqref{eqn:newZdefnstar}.
Then the following is a solution to \eqref{eqn:original_phi}:
\begin{equation}
\label{eqn:represenationZs}
    Z_t^i := \frac{i}{d}( \eps t +d) + \eps \generalizedq_t^i + (Q^\top \vecZZZ)^i_t,\qquad i=1,\ldots,d-1,
\end{equation}
where $(\generalizedq_t^i)$ are bounded deterministic functions defined above.
\end{lem}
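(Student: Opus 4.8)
The plan is to verify directly that the proposed process $Z_t^i$ in \eqref{eqn:represenationZs} satisfies all four lines of the system \eqref{eqn:original_phi}. First I would check the boundary and initial conditions: at $t=0$ the formula gives $Z_0^i = \tfrac{i}{d}\cdot d + \eps \generalizedq_0^i + (Q^\top\vecZZZ)_0^i = i + 0 + 0 = i$, using $\generalizedq_0 = 0$ (empty integral) and $\vecZZZ_0 = 0$; for $i=0$ and $i=d$ one extends by $\generalizedq_t^0 = \generalizedq_t^d = 0$ and notes $(Q^\top\vecZZZ)^0 = (Q^\top\vecZZZ)^d = 0$ by convention, so $Z_t^0 = 0$ and $Z_t^d = \tfrac{d}{d}(\eps t + d) = d + \eps t$, as required.

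Next I would differentiate the nontrivial components. Writing $\vecZ_{\mathrm{nt}} = (Z^1,\dots,Z^{d-1})^\top$, the formula reads $\vecZ_{\mathrm{nt},t} = \tfrac{1}{d}(\eps t + d)\boldsymbol{\iota} + \eps\,\generalizedq_t + Q^\top\vecZZZ_t$, where $\boldsymbol{\iota}^j = j$; note $\boldsymbol{\iota}/d = \boldsymbol{\nu}$ where $\boldsymbol{\nu}^j = j/d$, so the linear-in-$t$ term has coefficient vector $\eps\boldsymbol{\nu}$. Taking stochastic differentials: the first term contributes $\eps\boldsymbol{\nu}\,\dd t$; the second contributes $\eps\,\generalizedq_t'\,\dd t$; and for the third, since $\vecZZZ$ solves \eqref{eqn:newZdefnstar}, we get $Q^\top\dd\vecZZZ_t = Q^\top D\vecZZZ_t\,\phi(t)\,\dd t + \sigma Q^\top\dd\boldsymbol{B}_t$. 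Because $Q$ is unitary, $\sigma Q^\top\dd\boldsymbol{B}_t = \sigma\,\dd\tilde{\boldsymbol{B}}_t$ is again a standard vector Brownian motion, which supplies the noise term. For the drift I would use $\AA = Q^\top D Q$, hence $Q^\top D\vecZZZ_t = Q^\top D Q (Q^\top\vecZZZ_t) = \AA(Q^\top\vecZZZ_t)$.

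Collecting, the drift of $\vecZ_{\mathrm{nt}}$ is $\eps\boldsymbol{\nu} + \eps\generalizedq_t' + \phi(t)\AA(Q^\top\vecZZZ_t)$. The goal is to show this equals $\phi(t)\AA\vecZ_{\mathrm{nt},t}$ — but here one must be careful, because the matrix form in \eqref{eqn:original_phi} is stated only for the free indices $i=1,\dots,d-1$, and $\AA\vecZ_{\mathrm{nt}}$ must be interpreted with the understanding that $Z^0 = 0$ and $Z^d = d+\eps t$ enter the $i=1$ and $i=d-1$ equations as inhomogeneous terms. I would expand $\phi(t)\AA\vecZ_{\mathrm{nt},t}$ using $\vecZ_{\mathrm{nt}} = (\eps t + d)\boldsymbol{\nu} + \eps\generalizedq_t + Q^\top\vecZZZ_t$: the term $\phi(t)\AA(\eps t + d)\boldsymbol{\nu}$ is handled by observing that $\AA\boldsymbol{\nu}$ has a telescoping structure — in fact the evenly-spaced configuration $j/d$ makes $(\AA\boldsymbol{\nu})^j = 0$ for $2 \le j \le d-2$, with boundary contributions at $j=1, d-1$ that exactly account for the $Z^0, Z^d$ terms; the term $\phi(t)\AA\eps\generalizedq_t$ is rewritten via the stated identity $\AA\generalizedq_t\phi(t) = \generalizedq_t' + \boldsymbol{\nu}$, giving exactly $\eps\generalizedq_t' + \eps\boldsymbol{\nu}$; and $\phi(t)\AA(Q^\top\vecZZZ_t)$ matches. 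Thus everything cancels against the computed drift.

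The main obstacle I expect is bookkeeping at the boundary indices $i = 1$ and $i = d-1$, where the $(d-1)\times(d-1)$ matrix $\AA$ only "sees" $Z^1,\dots,Z^{d-1}$ but the true dynamics \eqref{eqn:original_phi} involves $Z^0 = 0$ and $Z^d = d + \eps t$. One has to check that the contribution of the deterministic affine part $\tfrac{i}{d}(\eps t + d)$ — evaluated at $i$ and at the neighbours $i\pm 1$, including $i=0$ and $i=d$ — reproduces precisely the discrete Laplacian $Z^{i+1} + Z^{i-1} - 2Z^i$ applied to that affine function, which vanishes in the interior (an affine function is discretely harmonic) and produces the correct inhomogeneity at the endpoints. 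Likewise, one must confirm the identity $\AA\generalizedq_t\phi(t) = \generalizedq_t' + \boldsymbol{\nu}$ by differentiating the integral defining $\generalizedq_t$ (Leibniz rule: the $s=t$ endpoint gives $-\boldsymbol{\nu}$, and the $\partial_t$ of the integrand gives $\AA\phi(t)$ times the integral, i.e. $\AA\phi(t)\generalizedq_t$, after pulling out $\AA\phi(t) = \partial_t(\AA\Phi_t)$ — modulo checking $\AA$ commutes with $\exp(\AA(\Phi_t - \Phi_s))$, which is immediate). These are the two computations to carry out carefully; the rest is substitution.
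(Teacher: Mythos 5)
Your verification of the SDE part is essentially the paper's own proof: both substitute the ansatz \eqref{eqn:represenationZs} into \eqref{eqn:original_phi}, use that the affine profile $\tfrac{i}{d}(\eps t+d)$ is discretely harmonic (with the boundary values $Z^0_t=0$ and $Z^d_t=d+\eps t$ agreeing with its extension to $i=0,d$, and $\generalizedq^0_t=\generalizedq^d_t=0$), use $\AA=Q^\top D Q$ together with \eqref{eqn:newZdefnstar} to handle the drift and noise of $Q^\top\vecZZZ$, and invoke the identity $\AA\generalizedq_t\phi(t)=\generalizedq_t'+\boldsymbol{\nu}$, which the paper asserts before the lemma and you correctly propose to check by the Leibniz rule. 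The initial-condition check is identical.

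There is, however, one piece of the statement you do not address: the claim that the functions $\generalizedq_t^i$ are \emph{bounded}, uniformly in $t$. This is not a mere side remark: $\generalizedq_t=-\int_0^t \exp\bigl(\AA(\Phi_t-\Phi_s)\bigr)\dd s\cdot\boldsymbol{\nu}$ is an integral over a horizon of length up to $t_\ast\sim 1/\eps$, so without a quantitative decay estimate the sup over $t$ could blow up as $\eps\to 0$, and it is precisely the uniform boundedness that makes the terms $\eps\generalizedq_t^i$ (and $\eps\Delta^i_t$, $\eps D^i$) negligible in the later arguments, e.g.\ in the proofs of Lemma~\ref{lem:noearlyZ} and Lemma~\ref{lem:ZXclose}. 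The paper closes this with a short spectral estimate: all eigenvalues $\lambda_j$ of $\AA$ are negative and $\phi\ge\kappa_{\min}>0$ under Assumption $\U$, so with $\mu:=\min_j|\lambda_j|$ one gets $\|\exp(\AA(\Phi_t-\Phi_s))\boldsymbol{\nu}\|_2\le e^{-\mu\kappa_{\min}(t-s)}\|\boldsymbol{\nu}\|_2$ and hence $\|\generalizedq_t\|_\infty\le\|\boldsymbol{\nu}\|_2/(\mu\kappa_{\min})$ for all $t$. Adding this (easy) estimate is needed to prove the lemma as stated.
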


\begin{proof}
Note that
\begin{eqnarray*}
    && ( Z_t^{i+1} -2Z^i_t +Z^{i-1}_t) \phi(t) \dd t + \sigma \dd \tilde B_t^i
\\
    &=& \frac{i+1-2i+i-1}{d}(\eps t +d)  \phi(t) \dd t + \eps (\generalizedq_t^{i+1}-2\generalizedq_t^{i}+\generalizedq_t^{i-1})  \phi(t) \dd t
\\
    && + ((Q^\top\vecZZZ)^{i+1}_t -2 (Q^\top\vecZZZ)^i_t+(Q^\top\vecZZZ)^{i-1}_t) \phi(t)  \dd t + \sigma \dd \tilde B_t^i
\\
    &=& \eps (\AA \generalizedq_t \phi(t) )^i   \dd t
    + (\AA Q^\top \vecZZZ_t \phi(t) )^i  \dd t + \sigma \dd \tilde B_t^i
\\
    &=& \eps (\generalizedq_t' + \boldsymbol{\nu} )^i   \dd t
    + (Q^\top D \vecZZZ_t \phi(t) )^i  \dd t + \sigma  \dd (Q^\top B)_t^i
\\
    &=& \eps (\generalizedq_t' + \boldsymbol{\nu} )^i   \dd t
    + (Q^\top  \vecZZZ)_t^i  \dd t
\\
    &=&   \dd Z^i_t,
\end{eqnarray*}
where we used \eqref{eqn:newZdefnstar} in the last but one step. Also the initial condition is verified:
\[
    Z_0^i = i + \eps \generalizedq_0^i + (Q^\top \vecZZZ)^i_0 = i.
\]
Let us finally show that the functions $(\generalizedq_t^i)$ are bounded. Indeed,
let $\mu:=\min |\lambda_j| > 0$. Then
\begin{eqnarray*}
    ||\generalizedq_t||_\infty &\leq &||\generalizedq_t||_2
\\
     &\le& \int_0^t || \exp( \AA (\Phi_t-\Phi_s) ) \boldsymbol{\nu} ||_2 \dd s
\\
     &=& \int_0^t || Q^\top \exp( D (\Phi_t-\Phi_s) ) Q \boldsymbol{\nu} ||_2 \dd s
\\
     &=& \int_0^t || \exp( D (\Phi_t-\Phi_s) ) Q \boldsymbol{\nu} ||_2 \dd s
\\
     &=& \int_0^t \left( \sum_{j=1}^{d-1} \exp( -2\lambda_j (\Phi_t-\Phi_s) ) |
     (Q \boldsymbol{\nu})^j  |^2 \right)^{1/2} \dd s
\\
     &\le& \int_0^t \left( \sum_{i=j}^{d-1} \exp( - 2\mu (\Phi_t-\Phi_s) ) |
     (Q \boldsymbol{\nu})^j  |^2 \right)^{1/2} \dd s
\\
     &=&   \int_0^t \exp( - \mu (\Phi_t-\Phi_s) ) \cdot || Q \boldsymbol{\nu}||_2 \dd s
\\
     &=&   \int_0^t \exp( - \mu \int_s^t \phi(r) \dd r  ) \cdot || \boldsymbol{\nu}||_2 \dd s
\\
     &\le&   \int_0^t \exp( - \mu \int_s^t \kappa_{\min} \dd r  ) \dd s\, || \boldsymbol{\nu}||_2
\\
     &=&  (1-\exp( - \mu t  )) \, \frac{ || \boldsymbol{\nu}||_2}{\mu \kappa_{\min}}.
\end{eqnarray*}
\end{proof}

\begin{rem}
\label{rem:linearcomb}
We proved in Lemma~\ref{lem:winternoteslemma} that the solution to \eqref{eqn:newZdefn} satisfies
\begin{eqnarray*}
    \var \ZZZ^j_t &\leq& C_1 \sigma^2,\quad t\leq t_\ast,
\\
    \E[ (\ZZZ^j_{t_1} - \ZZZ^j_{t_2})^2] &\leq & C_2 \sigma^2 |t_1 -t_2|,\qquad t_1,t_2\leq t_\ast, |t_1-t_2|\leq 1,
\end{eqnarray*}
which carries over to linear combinations of the $\ZZZ^j$, e.g.\
to the processes $(Q^\top \vecZZZ)^i$ in the representation \eqref{eqn:represenationZs}.
\end{rem}

\subsection{Proof of Lemma~\ref{lem:noearlyZ}}
\label{sec:noearlyZ}

Let us fix a break position $i\in\{1,...,d\}$.
Our starting point is a representation from Lemma~\ref{lem:represenation}:
\be \label{eqn:ZDeltaV}
     Z^i_t -Z^{i-1}_t = q_t+ \eps (g^i_t-g^{i-1}_t) + \sum_{j=1}^{d-1} c_{ij} \ZZZ^j_t
      =:  q_t+ \eps \Delta^i_t + V^i_t, \qquad i=0,\ldots,d-1,
\ee
where $\Delta^i_t$ is a bounded deterministic part, and $\ZZZ^j$ are the independent processes from
\eqref{eqn:newZdefn2} (for this proof, independence is irrelevant). Let $D^i:= \sup_{t>0} |\Delta^i_t|$.

The exit condition of $i$-th component
at a time $s$ is now equivalent to
$ q_s+ \eps \Delta^i_t +  V^i_s=\br$;
in other words, $V^i_s= \frac{\eps}{d}(t_\ast-s)- \eps \Delta^i_t$.
A necessary condition for the break is
$V^i_s\geq \frac{\eps}{d}(t_\ast-s)- \eps D^i$.
We may restate it as
\[
  \P(\tau^i_{Z,\br}\leq t) \leq \P\left(\exists s\leq t: V^i_s \geq \frac{\eps}{d}(t_\ast-s)-\eps D^i \right).
\]
\medskip

By Remark~\ref{rem:linearcomb}, the centered Gaussian process $V^i$,
being a linear combination of the $\ZZZ^j$, inherits their following properties:
\begin{eqnarray*}
  \var V^i_t &\le& C_1 \sigma^2, \qquad t\leq t_\ast,
\\
  \E\left(V^i_{t_2}-V^i_{t_1}\right)^2 &\leq&  C_2 \sigma^2 |t_1-t_2|, \qquad t_1,t_2\leq t_\ast, |t_1-t_2|\leq 1,
\end{eqnarray*}
with the constants $C_1,C_2$ depending on dimension $d$ and potential $U$.
By standard arguments of Gaussian process theory these bounds yield
\begin{equation} \label{eqn:GaussLD}
  \P\left( \sup_{s\in[t-1,t]} |V^i_s|\geq \sigma R\right)
  \leq \exp\left(-C_3 R^2\right),
  \qquad R>0, 1\leq t \leq t_\ast,
\end{equation}
with some $C_3=C_3(C_1,C_2)$.
\medskip

By using \eqref{eqn:GaussLD}, it follows that
\begin{eqnarray*}
  \P(\tau^i_{Z,\br}\leq t)
  &\leq& \sum_{k=0}^\infty  \P\left(\sup_{s\in[t-k-1,t-k]} |V^i_s|\geq \frac{\eps}{d}(t_\ast-t+(k-dD^i))\right)
\\
  &\leq& \sum_{k=0}^\infty  \exp\left( - C_3 [\frac{\eps}{d}(t_\ast-t+(k-dD^i))]^2/\sigma^2 \right)
\\
  &\leq&  \exp\left( - C_3 [\frac{\eps}{d}(t_\ast-t)]^2/\sigma^2 \right)
  \sum_{k=0}^\infty    \exp\left( - 2 C_3 \frac{\eps^2}{d^2\sigma^2}(t_\ast-t)(k-dD^i)) \right)
\\
  &=&  \exp\left( - C_3 [\frac{\eps}{d}(t_\ast-t)]^2/\sigma^2 \right)
   \left[ 1 -   \exp\left( - 2 C_3 \frac{\eps^2}{d^2\sigma^2}\, (t_\ast-t) \right)\right]^{-1}
\\ && \times \   \exp\left( 2 C_3 \frac{\eps^2}{d^2 \sigma^2}(t_\ast-t)D^i)) \right) .
\end{eqnarray*}

Letting here $t=t_\ast - \gamma_1 \, \frac{\sigma}{\eps} \sqrt{\ln\left(\frac{\sigma}{\eps}\right)}$,
as in the assertion of Lemma~\ref{lem:noearlyZ}, we obtain
\begin{eqnarray*}
  \P(\tau^i_{Z,\br}\leq t)
  &\leq& \left(\frac{\sigma}{\eps}\right)^{-C_3\gamma_1^2/d^2} \
  \left[  2 C_3 \frac{\gamma_1 \eps}{d^2 \sigma} \sqrt{\ln\left(\frac{\sigma}{\eps}\right)} \, \right]^{-1} (1+o(1))
\\
   &\leq& \left(\frac{\sigma}{\eps}\right)^{1-C_3\gamma_1^2/d^2} \   \frac{d^2}{2 C_3\gamma_1}\
   \left( \ln\left(\frac{\sigma}{\eps}\right)\right)^{-1/2} \ (1+o(1))
   \to 0,
\end{eqnarray*}
if we choose $\gamma_1$ so large that $C_3\gamma_1^2/d^2>1$. This finishes the proof of Lemma~\ref{lem:noearlyZ}.


\subsection{Proof of Lemma~\ref{lem:ZXclose}}
\label{sec:ZXclose}

As above, we set $\mathcal{T}:=[t_\ast-\gamma_1 \frac{\sigma}{\eps} \sqrt{\ln \sigma/\eps},t_\ast]$.

By Lemma~\ref{lem:represenation}, the solutions to \eqref{eqn:original} and \eqref{eqn:original_phi}
can be represented by
\[
    X^i_{\uu,t} = \frac{i}{d} ( \eps t +d) + \eps \generalizedq_t^i + (Q^\top \boldsymbol{\newY})^i_t
\]
and
\[
    Z^i_t = \frac{i}{d} ( \eps t +d) + \eps \tilde\generalizedq_t^i + (Q^\top \boldsymbol{\ZZZ})^i_t,
\]
respectively, where $\boldsymbol{\generalizedq}$ and $\boldsymbol{\tilde\generalizedq}$
are bounded deterministic functions and $\boldsymbol{\newY}$ and $\boldsymbol{\ZZZ}$
are the vectors of the solutions  to \eqref{eqn:newZdefn} with constant prefactor $\uu$ and
varying prefactor $\phi$, respectively.

For the differences, we have
\[
    X^i_{\uu,t} - Z^i_t = \eps( \generalizedq_t^i-\tilde\generalizedq_t^i) + (Q^\top (\newY-\ZZZ) )_t^i,
\]
so that (using the boundedness of $\generalizedq_t$ and $\tilde\generalizedq_t$) for some $K>0$ and all
$t\geq 0$
\begin{eqnarray*}
||\vecX_{\uu,t}-\vecZ_t||_\infty &\le& \eps K + ||Q^\top (\newY-\ZZZ)||_\infty
\\
&\le& \eps K + ||Q^\top (\newY-\ZZZ)||_2
\\
&=& \eps K + ||\newY-\ZZZ||_2
\\
&\le& \eps K + \sqrt{d-1} ||\newY-\ZZZ||_\infty.
\end{eqnarray*}

Fix $\theta>0$. Using that $\eps\ll \sigma/\sqrt{\ln \left(\tfrac{\sigma}{\eps}\right)}$, we have,
as $\eps,sigma\to 0$,
\begin{eqnarray*}
     && \P\left( \sup_{t\in\mathcal{T}}||\vecX_{\uu,t}-\vecZ_t||_\infty
         > \frac{\theta \sigma}{\sqrt{\ln  \left(\tfrac{\sigma}{\eps}\right)}} \right)
\\
     &\le& \P\left( \sqrt{d-1} \sup_{t\in\mathcal{T}}||\newY_t-\ZZZ_t||_\infty
            > \frac{1}{2} \frac{\theta \sigma}{\sqrt{\ln \left(\tfrac{\sigma}{\eps}\right)}} \right)
\\
     &\le& \sum_{i=1}^{d-1} \P\left(  \sup_{t\in\mathcal{T}}|\newY_t^i-\ZZZ_t^i|
            > \frac{(\theta/(2\sqrt{d-1})) \sigma}{\sqrt{\ln \left(\tfrac{\sigma}{\eps}\right)}} \right) \to 0,
\end{eqnarray*}
by Lemma~\ref{lem:winternoteslemma}. This finishes the proof of Lemma~\ref{lem:ZXclose}.


\subsection{Proof of Proposition \ref{prop:vbgeneral}}
\label{sec:proof_prop:vbgeneral}

Let, as above, $\MA$ denote the discrete Laplace operator in
one dimension with $d$ supporting points, i.e.\ the $(d-1)$-dimensional square 
matrix $\MA$ with $\MA_{i,j} = -2$ when $i=j$,  
$\MA_{i,j} = 1$ when $|i-j| = 1$,  
and $\MA_{i,j} = 0$ otherwise.
The largest eigenvalue $\lambda_1$ of $\MA$ is strictly negative, namely 
$\lambda_1 = -2 (1 - \cos(\pi/d))$.

By Assumption $\U$, $U''$ is continuous and strictly positive on $[1,\br]$. 
Therefore, there exist some $r>0$ and $u_2=u_2(r)>0$ such that 
$U''(x) > u_2(r)$ for all $x \in [1, \br+r]$. 
By the continuity of the third derivative of $U$, we have 
$u_3(r) := \sup \{  |U'''(x)|: |x| \leq \br +r \} < \infty$. 
We define the constant
\[
     c_r := c(U,d,r) : = \frac{6 (d-1) u_3(r)}{|\lambda_1| u_2(r)}
\]
and the stopping time
\[
     t_r(\omega) := \inf \{ t \in \mathbb R: \exists i \leq n \text{ with } 
     |\XX_s^i(\omega) - \XX_s^{i-1}(\omega)| \geq \br+r \}
     \wedge t_\ast.
\]

\begin{prop} \label{prop:comparison}
Let $r$, $c_r$ be as above. For all $\omega \in \Omega$ and all $t \leq t_r(\omega)$ we have
\[
S_{t}^\ast(\omega) \leq c_r  \big( (S_{t}^\ast(\omega))^2 + \frac12 (M_{t}^\ast(\omega))^2 \big).
\]
\end{prop}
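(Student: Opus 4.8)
The plan is to study the difference process $\boldsymbol{W}_t := \vecZ_t - \vecXX_t$ pathwise (componentwise $W_t^i = Z_t^i - \XX_t^i$, with $W_t^0 \equiv W_t^d \equiv 0$ by the boundary conditions). Since $\vecZ$ and $\vecXX$ solve \eqref{eqn:original_phi} and \eqref{eqn:original_U} driven by the same Brownian motions, the noise cancels, and for each $\omega$ the vector $(W_t^1,\dots,W_t^{d-1})$ is absolutely continuous and solves the random ODE obtained by subtracting the two drifts. The first step is to linearise that drift: writing $h_t^i := \XX_t^i - \XX_t^{i-1} - q_t$ and $m_t^i := Z_t^i - Z_t^{i-1} - q_t$, a second-order Taylor expansion of $U'$ at $q_t$, together with $\phi(t) = U''(q_t)$ and the elementary identity $m_t^i - h_t^i = W_t^i - W_t^{i-1}$, turns the first-order terms into $\phi(t)(\MA\boldsymbol{W}_t)^i$ (the discrete Laplacian $\MA$ acting on the interior components, since $W^0=W^d=0$) and leaves a purely quadratic remainder:
\[
   \dot{\boldsymbol{W}}_t = \phi(t)\,\MA\,\boldsymbol{W}_t + \boldsymbol{F}_t,
   \qquad
   F_t^i = \tfrac12 U'''(\xi_t^i)(h_t^i)^2 - \tfrac12 U'''(\xi_t^{i+1})(h_t^{i+1})^2,
\]
with each $\xi_t^i$ between $q_t$ and $\XX_t^i - \XX_t^{i-1}$. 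For $t \le t_r(\omega)$ every gap $\XX_s^i - \XX_s^{i-1}$ with $s\le t$ satisfies $|\XX_s^i - \XX_s^{i-1}| \le \br+r$, and since also $q_s\in[1,\br]$, each $\xi_s^i$ lies in $[-(\br+r),\br+r]$, so $|U'''(\xi_s^i)| \le u_3(r)$ throughout.

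Next I would solve this linear ODE by variation of constants. As $\phi(t)\MA$ is a scalar multiple of the fixed symmetric matrix $\MA$, the fundamental solution is $\exp(\MA(\Phi_t - \Phi_s))$ with $\Phi_t := \int_0^t \phi(u)\,\dd u$, giving
\[
   \boldsymbol{W}_t = \int_0^t \exp\big(\MA(\Phi_t - \Phi_s)\big)\,\boldsymbol{F}_s\,\dd s .
\]
Because $\MA$ is symmetric with largest eigenvalue $\lambda_1 < 0$ and $\phi(u) = U''(q_u) \ge u_2(r) > 0$ for $u \le t_\ast$, one has $\Phi_t - \Phi_s \ge u_2(r)(t-s)$, hence the operator-norm bound $\|\exp(\MA(\Phi_t - \Phi_s))\| \le e^{\lambda_1 (\Phi_t - \Phi_s)} \le e^{-|\lambda_1| u_2(r)(t-s)}$. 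Integrating,
\[
   \|\boldsymbol{W}_t\|_2 \;\le\; \int_0^t e^{-|\lambda_1| u_2(r)(t-s)}\,\|\boldsymbol{F}_s\|_2\,\dd s
   \;\le\; \frac{1}{|\lambda_1|\,u_2(r)}\;\sup_{0 \le s \le t}\|\boldsymbol{F}_s\|_2 .
\]

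It then remains to estimate $\|\boldsymbol{F}_s\|_2$ for $s\le t\le t_r$ in terms of $S_t^\ast$ and $M_t^\ast$. From the formula for $F^i$ and $|U'''(\xi_s^i)|\le u_3(r)$ one gets, via $\|\boldsymbol{F}_s\|_2 \le \|\boldsymbol{F}_s\|_1$, that $\|\boldsymbol{F}_s\|_2 \le u_3(r)\sum_{i=1}^d (h_s^i)^2$. Now $h_s^i = m_s^i - (W_s^i - W_s^{i-1})$, with $\sum_{i=1}^d (m_s^i)^2 \le (M_s^\ast)^2 \le (M_t^\ast)^2$ and $\sum_{i=1}^d (W_s^i - W_s^{i-1})^2 = \langle \boldsymbol{W}_s, -\MA\boldsymbol{W}_s\rangle \le 4\,\|\boldsymbol{W}_s\|_2^2 \le 4(S_t^\ast)^2$ (here $W^0=W^d=0$ is used). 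A Young inequality $(a+b)^2 \le (1+\eta)a^2 + (1+\eta^{-1})b^2$ with a suitable $\eta = \eta(d)$ then gives $\sum_i(h_s^i)^2 \le (1+\eta)(M_t^\ast)^2 + 4(1+\eta^{-1})(S_t^\ast)^2$. Inserting this into the previous display and taking the supremum over $t$ (using that $S_\cdot^\ast$ and $M_\cdot^\ast$ are nondecreasing) yields
\[
   S_t^\ast \;\le\; \frac{u_3(r)}{|\lambda_1|\,u_2(r)}\,\big((1+\eta)(M_t^\ast)^2 + 4(1+\eta^{-1})(S_t^\ast)^2\big),
\]
and one checks that $\eta=\eta(d)$ can be chosen so that $1+\eta \le 3(d-1)$ and $4(1+\eta^{-1}) \le 6(d-1)$ for every $d\ge 2$; this converts the right-hand side into $c_r\big((S_t^\ast)^2 + \tfrac12 (M_t^\ast)^2\big)$ with $c_r = \tfrac{6(d-1)u_3(r)}{|\lambda_1| u_2(r)}$, as claimed.

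The conceptual content is light — there is no probability beyond the fact that $\vecZ$ and $\vecXX$ are coupled through the same Brownian motions — so the main obstacle is the bookkeeping in the first step: one must verify that the first-order Taylor terms of $U'$ cancel exactly against the linear part of the $\vecZ$-drift, which is precisely where $\phi(t)=U''(q_t)$ is used and where the identity $m_t^i - h_t^i = W_t^i - W_t^{i-1}$ does the work. A secondary, purely numerical, care point is keeping the constants tight enough for the bound to hold uniformly down to $d=2$, where the crude $\ell^1$–$\ell^2$ and Poincaré-type estimates would otherwise be slightly too lossy.
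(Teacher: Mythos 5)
Your proposal is correct and follows essentially the same route as the paper's proof: difference process with cancelled noise, Taylor expansion of $U'$ around $q_t$ so that the linear part matches $\phi(t)\MA$, variation of constants with the bound $\|\exp(\MA(\Phi_t-\Phi_s))\|\le e^{\lambda_1 u_2(r)(t-s)}$, control of $U'''$ via the stopping time $t_r$, and monotonicity of $S^\ast,M^\ast$. The only deviation is the final elementary estimate, where you use $h^i=m^i-(W^i-W^{i-1})$, the Dirichlet-form bound $\langle W,-\MA W\rangle\le 4\|W\|_2^2$ and Young's inequality (with $\eta=2$) instead of the paper's $(a+b+c)^2\le 3(a^2+b^2+c^2)$ together with the crude $\ell^2$--$\ell^1$ comparison; both give the stated constant $c_r$.
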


\begin{proof}
Let $t$ be arbitrary at first.
We define $\vecWW_t = \vecXX_t - \vecZ_t$. Since $\vecZ$ and $\vecXX$ are driven 
by the same Brownian motions, the process $\vecWW$ fulfils
\begin{eqnarray*}
    \dd \WW_t^i &=&  \dd \XX_t^i - \dd Z_t^i
    = U'(\XX^{i+1}_t-\XX^{i}_t) \, \dd t - U'(\XX^{i}_t - \XX^{i-1}_{t}) \, \dd t
\\
     && - \, U''(q_t) (Z_t^{i+1} + Z_t^{-1} - 2 Z_t^i) \, \dd t.
\end{eqnarray*}
By adding and subtracting the term $U''(q_t) (\XX_t^{i+1} + \XX_t^{i-1} - 2 \XX_t^i) \, \dd t$,
and making the definition
\begin{equation}
\label{eqn:defphi}
	(\psi_t(x))^i :=
    U'(x^{i+1} - x^{i}) - U'(x^{i} - x^{i-1}) - U''(q_t) (x^{i+1} + x^{i-1} - 2 x^i ),
\end{equation}
we obtain
\[
   \dd \WW_t^i
   =  (\psi_t(\XX_t))^i + U''(q_t) (\WW_t^{i+1} - \WW_t^{i-1} - 2 \WW_t^i) \dd t
\]
In matrix notation, using the discrete Laplacian, this reads as
\[
    \dd \vecWW_t = \vecpsi_t(\vecXX_t) + \mathcal U''(q_t) \MA \vecWW_t \, \dd t,
\]
and we get
\[
    \vecWW_t = \int_0^t {\rm e}^{\int_s^t U''(q_v) \, d v \MA} \vecpsi_s(\vecXX_s) \, \dd s.
\]
The largest eigenvalue of the matrix ${\rm e}^{\int_s^t U''(q_v) \, d v \MA}$ is bounded above by
${\rm e}^{(t-s) u_2 \lambda_1}$, and thus by the matrix norm inequality we obtain
\begin{eqnarray} \label{eqn:intermediateStep}
   \| \vecZ_t(\omega) - \vecXX_t(\omega) \|_2
   &=& \| \vecWW_t(\omega) \|_2 \leq \int_0^t {\rm e}^{(t-s) u_2 \lambda_1} \|\psi_s(\vecXX_s(\omega))\|_2 \, \dd s
    \nonumber
\\
    &\leq & \frac{1}{u_2 \lambda_1} \sup_{s \leq t} \|\vecpsi_s(\vecXX_s(\omega))\|_2
\end{eqnarray}
for all $t > 0$ and all $\omega \in \Omega$.

When we Taylor expand the first two terms on the right hand side of \eqref{eqn:defphi} around the point $q_t$,
the terms of order one cancel the third term there, and
the second order remainder terms give the estimate
\[
   | (\psi_s(x))^i| \leq \frac12 | U'''(\xi_+) (x^{i+1} - x^i - q_s)^2| +  \frac12
   |U'''(\xi_-) (x^{i} - x^{i-1} - q_s)^2|
\]
where $\xi_+$ lies between $q_s$ and $x^{i+1} - x^i$, and where
$\xi_-$ lies between $q_s$ and $x^{i} - x^{i-1}$.

With $s \leq t_r(\omega)$ and $x^i = \XX_s(\omega)$, the definition of $t_r(\omega)$ and the fact that
$0 \leq q_s \leq \br$ for $s \leq t \leq t_0 \leq t_\ast$ yield $|\xi_{\pm}| < \br + r$.
Therefore,
\[
   |\psi_s(\vecXX_s(\omega))^i| \leq \frac{u_3(r)}{2} \Big(  (\XX^{i+1}_s(\omega) - \XX_s^i(\omega) - q_s)^2
   + (\XX_s^{i}(\omega) - \XX_s^{i-1}(\omega) - q_s)^2 \Big),
\]
and thus
\[
   \|\vecpsi_s(\vecXX_s(\omega))\|_2
   \leq (d-1)  \sum_{i=1}^{d-1} |\psi_s(\XX_s(\omega))^i| \leq  (d-1) u_3(r) \sum_{i=1}^d
   (\XX_s^{i}(\omega) - \XX_s^{i-1}(\omega) - q_s)^2.
\]
for all $s \leq t_r(\omega)$.
By the inequality
\[
    (\XX_s^{i} - \XX_s^{i-1} - q_s)^2
    \leq 3 (Z_s^i - \XX_s^i)^2 + 3 (Z_s^{i-1} - \XX_s^{i-1})^2 + 3(Z_s^i - Z_s^{i-1} - q_s)^2,
\]
and since $Z^0 \equiv \XX^0$ and $Z^d \equiv \XX^d$, we obtain
\[
   \|\vecpsi_s(\vecXX_s(\omega))\|_2 \leq  6 (d-1) u_3(r) \Big( \|\vecXX_s(\omega) - \vecZ_s(\omega) \|_2^2
   + \frac12 \sum_{i=1}^d | Z_s^i - Z_s^{i-1} - q_s|^2 \Big).
\]
Inserting this into the inequality \eqref{eqn:intermediateStep},
we obtain
\[
   \| \vecZ_{\tilde t} - \vecXX_{\tilde t} \|_2
   \leq
   \frac{6 (d-1) u_3(r)}{u_2(r) \lambda_1} \big( (S^\ast_{\tilde t}(\omega))^2
       + \frac12 (M^\ast_{\tilde t}(\omega))^2 \big)
\]
for all ${\tilde t} \leq t_r(\omega)$. The maps 
$\tilde t \mapsto (S^\ast_{\tilde t}(\omega))$ and
$\tilde t \mapsto (M^\ast_{\tilde t}(\omega))$
are monotone increasing. Therefore for $t \leq t_r(\omega)$, we obtain
the result by taking the supremum over $\tilde t \leq t$ on both sides of the above inequality.
\end{proof}

\begin{proof} [ of Proposition \ref{prop:vbgeneral}]
We decompose
\begin{eqnarray}
   \mathbb P(S_t^\ast \geq R) & = & \mathbb P( S_t^\ast \geq R, t \leq t_r, \sqrt{2} c_r M_t^\ast < 1)   \label{eqn:decomp1}
\\
   && + \mathbb P(S_t^\ast \geq R, t > t_r, \sqrt{2} c_r M_t^\ast < 1)  \label{eqn:decomp2}
\\
   && + \mathbb P(S_t^\ast \geq R, \sqrt{2} c_r M_t^\ast \geq 1). \label{eqn:decomp3}
\end{eqnarray}
The term \eqref{eqn:decomp3} is simply estimated by $\mathbb P(\sqrt{2} c_r M_t^\ast \geq 1)$, giving the third term on
the right hand side of the claim.

Turning to the term \eqref{eqn:decomp1}, we will show that for all $\omega \in \Omega$ with
$S_t(\omega) \geq R$, $t \leq t_r(\omega)$ and $\sqrt{2} c_r M_t^\ast(\omega) < 1$, the inequality
\begin{equation}
\label{eqn:the first term}
     M_t^\ast(\omega)^2 \geq \frac{S_t^\ast(\omega)}{2 c_r} \geq \frac{R}{2 c_r}
\end{equation}
holds, which then gives the first term on the right hand side of the claim. To see \eqref{eqn:the first term},
recall first that by Proposition \ref{prop:comparison} we have
\[
     c_r (S_s^\ast(\omega))^2 - S_s^\ast(\omega) + \frac12 c_r (M_s^\ast(\omega))^2 \geq 0
\]
for all $s \leq t_r(\omega)$. Since $t \leq t_r(\omega)$, this inequality holds for all $s \leq t$. Let $M > 0$.
The equation $y = c_r (y^2 + M^2/2)$ has two nonnegative solutions if and only if $c_r^2 M^2 < 1/2$, and in this case
the smaller one of those is given by
\[
   \frac{1 - \sqrt{1 - 2 c_r^2 M^2}}{2 c_r} < 2 c_r M^2.
\]
By the condition $\sqrt{2} c_r M_t^\ast(\omega) < 1$ and monotonicity 
we have $\sqrt{2} c_r M_s^\ast(\omega) < 1$ for all $s \leq t$;
by the above considerations (with $M = M_s^\ast(\omega)$), 
we find that for all $s \leq t$ and all $\omega$ fulfilling the
relevant conditions in \eqref{eqn:decomp1}, the value of $S_s^\ast(\omega)$ 
can not be in the interval between the two solutions of the quadratic equation 
for any $s \leq t$. Since the function $s \mapsto S_s^\ast(\omega)$ is continuous 
and has the value $0$ for $s=0$, it therefore has to stay to the left of the smaller
root, and is therefore for $s=t$ bounded by $2 c_r M_t^\ast(\omega)^2$. 
We thus arrive at \eqref{eqn:the first term}.

Finally we discuss the term \eqref{eqn:decomp2}. 
The considerations of the previous paragraph still apply, but only up
to $s = t_r(\omega) < t$. Therefore while we do not have \eqref{eqn:the first term}, 
we still know that
\[
    S_{t_r(\omega)}^\ast(\omega) \leq 2 c_r M_{t_r(\omega)}^\ast(\omega)
\]
for all $\omega$ relevant to \eqref{eqn:decomp2}.
Since  $t_r(\omega)< t \leq t_\ast$, by definition of $t_r(\omega)$, 
there is at least one $i \leq d$ with 
$|\XX_{t_r(\omega)}^i(\omega) - \XX_{t_r(\omega)}^{i-1}(\omega)| = \br+r$, 
and we get
\begin{eqnarray*}
    \br+r &=& |\XX_{t_r(\omega)}^i(\omega) - \XX_{t_r(\omega)}^{i-1}(\omega)|
    \leq
     |\XX_{t_r(\omega)}^i(\omega) - Z_{t_r(\omega)}^{i}(\omega)|
\\
      && + |Z_{t_r(\omega)}^i(\omega) - Z_{t_r(\omega)}^{i-1}(\omega)|
         + |\XX_{t_r(\omega)}^{i-1}(\omega) - Z_{t_r(\omega)}^{i-1}(\omega)|
\\
      &\leq& |Z_{t_r(\omega)}^i(\omega) - Z_{t_r(\omega)}^{i-1}(\omega)| 
      + 2 S_{t_r(\omega)}^\ast(\omega)
\\
     &\leq&  |Z_{t_r(\omega)}^i(\omega) - Z_{t_r(\omega)}^{i-1}(\omega)| 
     + 4 c_r (M_{t_r(\omega)}^\ast(\omega))^2.
\end{eqnarray*}
Therefore,
\begin{eqnarray*}
     M_t^\ast(\omega) & \geq & |Z^i_{t_r(\omega)}(\omega) - Z^{i-1}_{t_r(\omega)}(\omega) 
     - q_{t_r(\omega)}|
\\
     &\geq&  |Z^i_{t_r(\omega)}(\omega) - Z^{i-1}_{t_r(\omega)}(\omega)| - \br
\\
     &\geq& r - 4 c_r (M_{t_r(\omega)}^\ast(\omega))^2 \geq r - 4 c_r (M_{t}^\ast(\omega))^2.
\end{eqnarray*}
Therefore, $M_t^\ast(\omega) + 4 c_r (M_t^\ast)(\omega)^2 \geq r$ for all $\omega$ relevant to \eqref{eqn:decomp2},
and we obtain the second term on the right hand side of the claim.
\end{proof}


\subsection{Proof of Proposition \ref{prop:vbspecial}}
\label{sec:proof_prop:vbspecial}

By Proposition \ref{prop:vbgeneral}, it is enough to prove
that for $t=t_\ast=\tfrac{d(\br-1)}{\eps}$ the right hand side of \eqref{eqn:vbgeneral} tends to zero,
as $\eps,\sigma\to 0$.

Under assumption \eqref{eqn:stretch_vb}
we have $\delta\to 0$. Therefore, the first probability in the
right hand side of \eqref{eqn:vbgeneral} dominates two others. It remains to prove that
for every $\theta>0$ we have
\[
   \lim_{\eps,\sigma\to 0} \P(M_{t_\ast}^*\geq \sqrt{\delta}/C)=0.
\]
Since $\theta>0$ is arbitrary, we may drop $C$ here. Furthermore, by using the union bound, it is sufficient to prove
that
\[
   \lim_{\eps,\sigma\to 0} \P( \sup_{0\leq s\leq t_\ast}
   |Z_s^i-Z_s^{i-1}- q_s|
   \geq \sqrt{\delta})=0,
   \qquad i=1,...,d.
\]
We fix $i$. By representation \eqref{eqn:ZDeltaV} we have
\begin{eqnarray*}
   \P( \sup_{0\leq s\leq t_\ast} |Z_s^i-Z_s^{i-1}- q_s| \geq \sqrt{\delta})
   &\leq&
    \P( \sup_{0\leq s\leq t_\ast} |V_s^i| \geq \sqrt{\delta}-\eps D_i)
\\
   &\leq&
   (t_\ast+1) \max_{0 \leq k\leq [t_\ast]}
    \P( \sup_{k\leq s\leq k+1} |V_s^i| \geq \sqrt{\delta}-\eps D_i).
\end{eqnarray*}
Under  \eqref{eqn:stretch_vb} we have
\[
   \frac{\eps}{\delta} = \frac{\eps}{\theta\sigma} \ \sqrt{\ln\left(\frac{\sigma}{\eps}\right)} \to 0.
\]
Hence, $\eps\ll\delta\ll\sqrt{\delta}$, and eventually  $\sqrt{\delta}-\eps D_i\geq  \sqrt{\delta}/2$.
By using \eqref{eqn:GaussLD}, we obtain
\[
    \P( \sup_{0\leq s\leq t_\ast} |Z_s^i-Z_s^{i-1}- q_s| \geq \sqrt{\delta})
   \leq
    (t_\ast+1) \exp\left\{- \frac{C_3 \delta}{4\sigma^2}\right\}.
\]
Notice that $t_\ast\approx \eps^{-1}$, while under assumption \eqref{eqn:stretch_vb}
\[
 \sigma^{-2}\delta= \theta \sigma^{-1}/ \sqrt{\ln\left(\frac{\sigma}{\eps}\right)}
 \geq  \theta \sigma^{-1}/ \sqrt{|\ln\eps|}
 = \theta \left(\sigma^2  |\ln\eps|^3 \right)^{-1/2}  |\ln\eps| \gg |\ln\eps|,
\]
which completes the proof of Proposition~\ref{prop:vbspecial}.

\bigskip
{\bf Acknowledgement.} This research was supported by the co-ordinated
grants of DFG (AU370/7) and RFBR (20-51-12004).


\end{document}